\documentclass[11pt]{article}

\usepackage{mathrsfs}

\textheight=230mm \textwidth=167mm
\topmargin=-1.5cm
\oddsidemargin=-0.04cm
\evensidemargin=-0.0cm
\usepackage{amsmath,amsthm,amssymb}
\usepackage{graphicx}
\usepackage{vector}

\font\tencmmib=cmmib10 \skewchar\tencmmib '60
\newfam\cmmibfam
\textfont\cmmibfam=\tencmmib

\def\lessim{\ \lower4pt\hbox{$
		\buildrel{\displaystyle <}\over\sim$}\ }
\def\gessim{\ \lower4pt\hbox{$\buildrel{\displaystyle >}
		\over\sim$}\ }

\newcommand{\e}{\mathbb{E}}
\newcommand{\p}{\mathbb{P}}

\newtheorem{lemma}{\bf Lemma}

\newtheorem{theorem}{\bf Theorem}

\newtheorem{proposition}{\bf Proposition}

\newenvironment{Proof of lemma}{\noindent{\bf Proof of Lemma}}{\hfill$\Box$\newline}
\newenvironment{Proof of theorem}{\noindent{\bf Proof of Theorem}}{\hfill{\footnotesize${\square}$}\newline}
\newenvironment{Proof of theorems}{\noindent{\bf Proof of Theorems}}{\hfill$\Box$\newline}
\newenvironment{Proof of proposition}{\noindent{\bf Proof of Proposition}}{\hfill$\Box$\newline}
\newenvironment{Proof of propositions}{\noindent{\bf Proof of Propositions}}{\hfill$\Box$\newline}
\newenvironment{Proof of exercise}{\noindent{\it Proof of Exercise:}}{\hfill$\Box$}



\begin{document}
	\title{Limit Theorems in the Imitative Monomer-Dimer Mean-Field Model via Stein's Method}
	
	\author{
		Wei-Kuo Chen\thanks{Email:wkchen@math.uchicago.edu} \\
		\small{University of Chicago and University of Minnesota}
	}
	\date{}
	\maketitle
	\begin{abstract}
		We consider the imitative monomer-dimer model on the complete graph introduced in \cite{ACM14}. It was understood that this model is described by the monomer density and has a phase transition along certain critical line. By reverting the model to a weighted Curie-Weiss model with hard core interaction, we establish the complete description of the fluctuation properties of the monomer density on the full parameter space via Stein's method of exchangeable pairs. We show that this quantity exhibits the central limit theorem away from the critical line and enjoys a non-normal limit theorem  at criticality with normalized exponent $3/4$. Furthermore, our approach also allows to obtain the conditional central limit theorems along the critical line. In all these results, the Berry-Esseen inequalities for the Kolomogorov-Smirnov distance are given.
	\end{abstract}
	
	{\it Keywords}: Monomer-Dimer model, Stein's method
	
	{\it Mathematics Subject Classification(2000)}: 60F05, 82B20

	\section{Introduction and main results}
	
	In \cite{ACM14,ACFM15}, the authors introduced a mean-field system of interacting monomers and dimers with imitative interaction, called the imitative monomer-dimer (IMD) model. Depending on the attractive potential $J>0$ and the monomer potential $h$, this model was described by the infinite volume limit of the monomer density and was shown to exhibit a phase transition along a critical line $\Gamma.$ More precisely, the monomer density converges to a constant $m_0(J,h)$ for any $(J,h)\notin \Gamma$ and is concentrated at two distinct values $m_1(J,h)$ and $m_2(J,h)$ for $(J,h)\in\Gamma.$
	The aim of this investigation is to establish limit theorems for the monomer density on the full parameter space $(J,h)$. Following the methodology of the Curie-Weiss (CW) model \cite{EN78:1,EN78:2}, the previous known results in this direction were obtained recently in \cite{ACFM15}, where it was proven that this quantity satisfies the central limit theorem away from the critical line and possesses a non-normal limit behavior at the criticality with normalized exponent $3/4$. In the present paper, we study the limit  theorems for the monomer density in a completely different approach via Stein's method for exchangeable pairs. 
	We recover the results in \cite{ACFM15} and provide the Berry-Esseen type inequalities for the Kolmogorov-Smirnov distance. Furthermore, our approach also extends to the conditional central limit theorems along the critical line $\Gamma$ given the monomer density being above or below any fixed level $\xi$ between $m_1(J,h)$ and $m_2(J,h).$ These results together conclude a complete description of the fluctuation properties of the IMD model. 
	
    We now introduce the IMD model and state our main results as follows. For $N\geq 1$, let $C=(V  ,E  )$ be a complete graph with vertex set $V  =\{1,\ldots,N\}$ and edge set $E  =\{uv\equiv\{u,v\}:u,v\in V  ,\,u<v\}.$ A dimer configuration $D$ on $C$ is a set of edges such that $uw\notin D$ for all $w\neq v$ if $uv\in D$ and the set of monomers $\mathscr{M}(D)$, associated to $D$, is the collection of dimer-free vertices. Denote by $\mathscr{D}$ the set of all dimer configurations. Apparently, by definition, the dimer configuration and the monomer set satisfy the equation of hard core interaction,
	\begin{align}
	\label{hc}
	2|D|+|\mathscr{M}(D)|=N.
	\end{align} 
	The Hamiltonian of the IMD model with imitation coefficient $J\geq 0$ and external field $h\in\mathbb{R}$ is defined as
	\begin{align*}
	-H (D)&=N\bigl(a  m  (D)^2+b  m  (D)\bigr)
	\end{align*}
	for all $D\in \mathscr{D}  ,$ where $$
	m  (D)=\frac{|\mathscr{M}(D)|}{N}$$ is called the monomer density and the parameters $a  $ and $b  $ are given by
	\begin{align*}
	a&=J\,\,\mbox{and}\,\,b=\frac{\log N}{2}+h-J.
	\end{align*} The associated Gibbs measure and free energy are defined respectively as
	\begin{align*}
	\p(D)&=\frac{e^{-H(D)}}{\sum_{D\in \mathscr{D}  }e^{-H(D)} }
	\end{align*}
	and
	\begin{align*}
	p_N=\frac{1}{N}\log \sum_{D\in \mathscr{D}  }e^{-H(D)}.
	\end{align*}
	It is well-known that the infinite volume limit of the free energy of the IMD model is given by
	\begin{align}\label{fg}
	\lim_{N\rightarrow\infty}p_N  =\sup_{m\in[0,1]}\tilde{p}(m),
	\end{align}
	where letting 
	\begin{align}
	\begin{split}\label{g}
	g(x)&=\frac{1}{2}\bigl(\sqrt{e^{4x}+2e^{2x}}-e^{2x}\bigr)
	\end{split}\end{align}
	and
	\begin{align}\label{tau}
	\tau(x)&=(2x-1)J+h,
	\end{align}
	the function $\tilde{p}$ is defined as
	\begin{align*}
	\tilde{p}(m)&=-Jm^2-\frac{1}{2}\Bigl(1-g\circ\tau(m)+\log(1-g\circ\tau(m))\Bigr).
	\end{align*}
	In \cite{ACM14}, it has been investigated that the IMD model exhibits three different phases. We summarize the first two as follows. Let
	\begin{align*}
	J_c&=\frac{1}{4(3-2\sqrt{2})}\,\,\mbox{and}\,\,	h_c=\frac{1}{2}\log(2\sqrt{2}-2)-\frac{1}{4}.
	\end{align*}
	There exists a function $\gamma:(J_c,\infty)\rightarrow \mathbb{R}$ with $\gamma(J_c)=h_c$ such that for $\Gamma:=\{(J,\gamma(J)):J>J_c\}$, if $(J,h)\notin\Gamma$, then \eqref{fg} has a unique maximizer $m_0$ and this quantity satisfies
	\begin{align}
	\label{thm1:eq}
	m_0&=g\circ \tau(m_0).
	\end{align}
	Furthermore, if $(J,h)\neq (J_c,h_c)$, then $\tilde{p}''(m_0)<0$ and if $(J,h)=(J_c,h_c)$, then $m_0=m_c:=2-\sqrt{2}$ and
	\begin{align*}
	\tilde{p}'(m_c)=0,\,\,\tilde{p}''(m_c)=0,\,\,\tilde{p}^{(3)}(m_c)=0,\,\,\tilde{p}^{(4)}(m_c)<0.
	\end{align*}
	The importance of the maximizer lies on the fact that the monomer density satisfies the law of large numbers that $m  (D)\rightarrow m_0$ for any $(J,h)\notin \Gamma$, which can be seen either from \cite[Theorem 1.5]{ACFM15} or from Lemma \ref{prop3} below. 
	It is therefore natural to investigate the fluctuation of the monomer density, for which results related to this direction have been implemented in a recent paper \cite{ACFM15}, where the authors proved the limit theorems for any pair $(J,h)\notin \Gamma$ by adapting the classical treatment for the CW model from \cite{EN78:1,EN78:2}. Our main results here establish the same limit theorems and more importantly, give the Berry-Esseen type inequalities.
	
	\begin{theorem}\label{mainthm}
		If $(J,h)\notin \Gamma\cup\{(J_c,h_c)\}$, then there exists some constant $K$ such that
		\begin{align}\label{mainthm:eq1}
		\sup_z\Bigl|\p\Bigl(\frac{|\mathscr{M}(D)|-Nm_0}{N^{1/2}}\leq z\Bigr)-\p(X\leq z)\Bigr|\leq \frac{K}{N^{1/2}},
		\end{align} 
		where $X$ is a normal random variable with mean zero and variance $\lambda:=-\tilde{p}''(m_0)^{-1}-(2J)^{-1}>0.$
		If $(J,h)=(J_c,h_c)$, then there exists some constant $K$ such that
		\begin{align}\label{mainthm:eq2}
		\sup_z\Bigl|\p\Bigl(\frac{|\mathscr{M}(D)|-Nm_0}{N^{3/4}}\leq z\Bigr)-\p(Y\leq z)\Bigr|\leq \frac{K}{N^{1/4}},
		\end{align}
		where letting 
		$
		\lambda_c:=-\tilde{p}^{(4)}(m_c)>0,
		$
		the random variable $Y$ has density $
		ce^{-\lambda_cz^4/24}
		$
		with $c$ a normalizing constant.
	\end{theorem}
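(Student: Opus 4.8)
The plan is to reduce the IMD model to a weighted Curie--Weiss model with hard-core interaction and then apply Stein's method of exchangeable pairs. The key observation is that, conditionally on the monomer set $\mathscr{M}(D)$, the number of dimer configurations compatible with a given monomer set of size $k$ depends only on $N-k$; combined with \eqref{hc}, this lets us write $\p(|\mathscr{M}(D)|=k)$ explicitly as a product of a combinatorial factor (counting matchings on $N-k$ vertices) and the Gibbs weight $e^{N(am^2+bm)}$ with $m=k/N$. Using the classical asymptotics for the number of matchings on the complete graph together with the choice $b=\tfrac12\log N+h-J$, the $\log N$ in $b$ exactly cancels the factorial growth of the matching count, so that $\p(|\mathscr{M}(D)|=k)$ takes the Gibbs form $\propto e^{N\tilde p(k/N)}$ up to controlled lower-order corrections. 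This is precisely the sense in which the model is a \emph{weighted} Curie--Weiss model: the law of $m(D)$ is a discrete measure with a smooth large-deviations rate function $-\tilde p$.

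\textbf{Key steps.} First I would make the reduction above rigorous, obtaining a local expansion of $\log \p(|\mathscr{M}(D)|=k)$ around $k = Nm_0$ (resp. $k=Nm_c$) with explicit error control; the Taylor coefficients of $\tilde p$ at the maximizer, already recorded in the excerpt, supply the leading behaviour ($\tilde p''(m_0)<0$ in the generic case, vanishing up to third order with $\tilde p^{(4)}(m_c)<0$ at the critical point). Second, I would construct an exchangeable pair: introduce auxiliary spin-type variables so that resampling one coordinate produces a configuration $D'$ with $(D,D')$ exchangeable, and compute the linear (resp. cubic) regression $\e[|\mathscr{M}(D')|-|\mathscr{M}(D)| \mid D]$. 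Away from criticality the drift is approximately linear in $W=(|\mathscr{M}(D)|-Nm_0)/N^{1/2}$ with coefficient of order $N^{-1}$, which is the hypothesis of the standard Stein exchangeable-pair CLT with Berry--Esseen bound; the variance identity then pins down $\lambda=-\tilde p''(m_0)^{-1}-(2J)^{-1}$, the extra $-(2J)^{-1}$ coming from the quadratic imitation term (this is the ``reverting to a weighted CW model'' correction). Third, at $(J_c,h_c)$ the linear term degenerates and the leading drift is cubic in $W=(|\mathscr{M}(D)|-Nm_0)/N^{3/4}$; here I would invoke (or re-derive) the version of Stein's method for exchangeable pairs with a cubic regression term that produces convergence to the density $ce^{-\lambda_c z^4/24}$ with an $N^{-1/4}$ rate, checking that the remainder terms in the regression and the conditional variance are of the right order.

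\textbf{Main obstacle.} The technical heart is the error analysis in the two expansions. For the Berry--Esseen bound one needs not just $\e[W'-W\mid W]\approx -\tfrac{c}{N}\,W$ but quantitative control on the remainder $R$ in $\e[W'-W\mid W] = -\tfrac{c}{N}W + R$, on $\e|W'-W|^3$, and on the fluctuations of the conditional variance $\e[(W'-W)^2\mid W]$ around its mean. These require uniform-in-$k$ bounds on the ratio $\p(|\mathscr{M}(D)|=k+1)/\p(|\mathscr{M}(D)|=k)$ and, crucially, control away from the bulk: because $m(D)$ lives on the whole lattice $\{0,1/N,\dots,1\}$ with a rate function that is only locally quadratic (resp. quartic) at the maximizer, one must separately show the exponentially small contribution of the tails does not spoil the polynomial Berry--Esseen rate. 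At criticality this is more delicate still, since the Gaussian approximation is replaced by a quartic-exponential density and the natural scaling is $N^{3/4}$; the degeneracy $\tilde p''=\tilde p^{(3)}=0$ must be used to show the cubic term genuinely dominates, and the fourth-order remainder controlled on the scale $|W|\lesssim N^{1/4}$. I expect assembling these estimates — rather than the invocation of the Stein machinery itself — to be the bulk of the work.
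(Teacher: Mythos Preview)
Your outline misses the key structural constraint that drives the paper's argument: the hard-core relation \eqref{hc} forces $|\mathscr{M}(D)|\equiv N\pmod 2$, so the monomer count (equivalently, the magnetization $\sum_i\sigma_i$ in the spin reformulation) lives on a fixed parity sublattice. Consequently the ratio $\p(|\mathscr{M}(D)|=k+1)/\p(|\mathscr{M}(D)|=k)$ that you plan to control is identically $0$ or $\infty$, and ``resampling one coordinate'' of an auxiliary spin configuration $\sigma\in\{0,1\}^N$ sends the weight $D(\sigma)$ to zero. The single-spin exchangeable pair used in the classical CW analysis therefore collapses here, and the linear/cubic regression you describe cannot be read off in the usual way.

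The paper's remedy is to update a \emph{pair} of spins $(\sigma_u,\sigma_v)$ at a time, sampling $uv$ uniformly from the edge set and replacing $(\sigma_u,\sigma_v)$ by its conditional law given the remaining spins; this preserves parity. The conditional moments $\e[M-M'\mid\sigma]$ and $\e[(M-M')^2\mid\sigma]$ are then computed explicitly via the matching count \eqref{ce}, yielding functions $L_1(m),L_2(m)$ that are \emph{not} simply derivatives of $\tilde p$. The Stein machinery of \cite{CS11} is applied to $g(W)\propto L_1^{(2k+1)}(m_0)W^{2k+1}$, and only at the very end does an algebraic computation (using $m_0^2=(1-m_0)e^{2\tau(m_0)}$) show that $L_1'(m_0)/L_2(m_0)=1/(2\lambda)$ and, at criticality, $2L_1'''(m_c)/(4!\,L_2(m_c))=\lambda_c/24$. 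Your plan to first obtain a local expansion of $\log\p(|\mathscr{M}(D)|=k)$ around $Nm_0$ is a separate route (essentially a local limit theorem via Laplace's method) that would bypass Stein's method altogether; mixing it with an exchangeable-pair argument is redundant, and in any case would have to be carried out on the even (or odd) sublattice with step size $2$.
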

	
	The third phase is along the critical line $(J,h)\in \Gamma$. The variational formula $\eqref{fg}$ now has two distinct maximizers $m_1$ and $m_2$ with $m_1<m_2.$ They both satisfy \eqref{thm1:eq} and $\tilde{p}''(m_1),\tilde{p}''(m_2)<0,$ see \cite{ACM14}. Moreover, it was later understood in \cite{ACFM15} that the monomer density converges weakly to an atomic measure $p_1\delta_{m_1}+p_2\delta_{m_2}$ for some $p_1,p_2>0.$ Let $\xi\in (m_1,m_2).$
	Set
	\begin{align}
	\begin{split}
	\label{setd}
	\mathscr{D}_1&=\{D\in\mathscr{D}:m(D)<\xi\},\\
	\mathscr{D}_2&=\{D\in \mathscr{D}:m(D)>\xi\}.
		\end{split}
	\end{align}
	Our next result presents central limit theorems for the monomer density conditioning on $\mathscr{D}_1$ and $\mathscr{D}_2.$
	
	\begin{theorem}
		\label{thm4}
		If $(J,h)\in \Gamma,$ then there exists some $K>0$ such that 
		\begin{align*}
		\sup_z\Bigl|\p \Bigl(\frac{|\mathscr{M}(D)|-Nm_\ell}{N^{1/2}}\leq z\Big|\mathscr{D}_\ell\Bigr)-\p(X_\ell\leq z)\Bigr|\leq \frac{K}{N^{1/2}},
		\end{align*}
		where $X_\ell$ is a normal random variable with mean zero and variance $\lambda_\ell:=-\tilde{p}''(m_\ell)^{-1}-(2J)^{-1}>0.$ 
	\end{theorem}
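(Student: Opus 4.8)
The plan is to reduce Theorem \ref{thm4} to the non-degenerate case \eqref{mainthm:eq1} of Theorem \ref{mainthm}, by showing that conditioning on $\mathscr{D}_\ell$ is, on the scale $N^{1/2}$, invisible to every quantity that enters Stein's method. The structural remark that makes this possible is the following. Since $(J,h)\in\Gamma$ and $\xi\in(m_1,m_2)$, we have $\p(\mathscr D_1)\to p_1>0$ and $\p(\mathscr D_2)\to p_2>0$ (by the weak convergence of $m(D)$ to $p_1\delta_{m_1}+p_2\delta_{m_2}$), so $\p(\cdot\mid\mathscr D_\ell)$ is well defined for large $N$; moreover $m_\ell$ lies strictly inside the half-interval $\{m:m<\xi\}$ when $\ell=1$ and $\{m:m>\xi\}$ when $\ell=2$, it is the \emph{unique} maximizer of $\tilde p$ on the closure of that half-interval (the only other global maximizer $m_{3-\ell}$ of $\tilde p$ on $[0,1]$ is ruled out by the constraint), and it is non-degenerate: $\tilde p''(m_\ell)<0$ and $m_\ell=g\circ\tau(m_\ell)$ as in \eqref{thm1:eq}. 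Thus, locally, the conditional model looks exactly like the model of Theorem \ref{mainthm} at a point $(J,h)\notin\Gamma\cup\{(J_c,h_c)\}$, with $m_0$ replaced by $m_\ell$.

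First I would prove a conditional localization estimate: for every $\delta>0$ there is $c=c(\delta)>0$ with $\p(|m(D)-m_\ell|>\delta\mid\mathscr D_\ell)\le e^{-cN}$, and, on the $N^{-1/2}$ scale, that $(|\mathscr M(D)|-Nm_\ell)/N^{1/2}$ is uniformly sub-Gaussian under $\p(\cdot\mid\mathscr D_\ell)$; this is the conditional analogue of Lemma \ref{prop3}. It follows from a Laplace-type analysis of $\sum_{D\in\mathscr D_\ell}e^{-H(D)}$, which equals $\sum_{k} c_{N,k}\,e^{N(a(k/N)^2+b(k/N))}$ with $c_{N,k}=\#\{D:|\mathscr M(D)|=k\}$ and the sum restricted to the $k$ with $k/N$ in the admissible half-line, together with the near-perfect matching count on $C$ underlying \eqref{fg} and the uniform gap $\tilde p(m_\ell)-\sup\{\tilde p(m):m\text{ admissible},\ |m-m_\ell|\ge\delta\}>0$ from the previous paragraph.

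Next I would set $W_\ell:=(|\mathscr M(D)|-Nm_\ell)/N^{1/2}$ and run Stein's method of exchangeable pairs under $\p(\cdot\mid\mathscr D_\ell)$, using the same coordinate-resampling dynamics of the weighted Curie--Weiss representation as in the proof of Theorem \ref{mainthm}, now made reversible for the conditional measure, with the (by the localization estimate, exponentially rare) transitions that would move $m(D)$ across $\xi$ suppressed. Suppressing these transitions preserves exchangeability and perturbs every conditional expectation below by $O(e^{-cN})$. Using the localization estimate one Taylor-expands all relevant quantities around $m=m_\ell$; since $m_\ell$ and $\tilde p''(m_\ell)$ play precisely the roles of $m_0$ and $\tilde p''(m_0)$ in \eqref{mainthm:eq1}, the linear-regression computation reproduces, for a suitable $\theta_\ell>0$, $\e[W_\ell-W_\ell'\mid D]=\theta_\ell N^{-1}W_\ell+R_N$ with $\e|R_N|=O(N^{-1/2})$, and the same accounting of $\e[(W_\ell-W_\ell')^2\mid D]$ that produced $\lambda$ in Theorem \ref{mainthm} yields here $\lambda_\ell=-\tilde p''(m_\ell)^{-1}-(2J)^{-1}>0$. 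Feeding $\theta_\ell$, the increment bound $|W_\ell-W_\ell'|\le CN^{-1/2}$, and the smallness of the variance of $\frac{N}{2\theta_\ell}\e[(W_\ell-W_\ell')^2\mid D]$ into the exchangeable-pair Berry--Esseen bound for the Kolmogorov distance used for Theorem \ref{mainthm} gives $\sup_z|\p(W_\ell\le z\mid\mathscr D_\ell)-\p(X_\ell\le z)|\le KN^{-1/2}+O(e^{-cN})$ with $X_\ell\sim\mathcal N(0,\lambda_\ell)$, which is the assertion.

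The main obstacle — and the only genuinely new ingredient compared with Theorem \ref{mainthm} — is controlling the interplay between the resampling dynamics and the hard constraint $m(D)<\xi$ (resp. $m(D)>\xi$): one must verify both that exchangeability survives the conditioning and that the boundary leakage at $\xi$ is negligible uniformly in $N$. This is exactly what the localization estimate of the second paragraph supplies, because $m_\ell$ sits strictly inside the admissible region and $\tilde p$ has a strict, non-degenerate maximum there; after that, every remaining estimate is a verbatim transcription of the $m_0$ argument with $\tilde p''<0$, and the constants depend only on the local data at $m_\ell$, so the bound $K/N^{1/2}$ follows.
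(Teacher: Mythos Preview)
Your proposal is correct and follows essentially the same route as the paper: construct the exchangeable pair directly under the conditional law $\p_\ell$ via Proposition~\ref{prop0}, note that away from a thin boundary strip $S_\ell''$ near $m(\sigma)=\xi$ the conditional update probabilities coincide with the unconditional ones, bound $\p_\ell(S_\ell'')\le Ke^{-\eta N}$ through the conditional version of Lemma~\ref{prop3}, and then rerun the $k=0$ case of Lemmas~\ref{lem3}--\ref{lem4} with $m_0$ replaced by $m_\ell$. Two minor remarks: the paper obtains the needed moment control $\e_\ell W_\ell^2\le K$ internally from the exchangeable-pair identity (its Lemma~\ref{lem-3}) rather than from your a~priori sub-Gaussian Laplace estimate, and your stated order $\e|R_N|=O(N^{-1/2})$ should read $O(N^{-3/2})$, since only after multiplication by $c_{0,\ell}\asymp N$ does this term yield the $O(N^{-1/2})$ contribution to the Kolmogorov bound.
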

	
    Theorems \ref{mainthm} and \ref{thm4} together describe the fluctuation properties of the model on the full parameter space. They will be established based on the general framework of the Stein method of exchangeable pairs in \cite{CS11}. This approach has been greatly used in obtaining the limit theorems for the magnetization in the classical CW model \cite{CS11,CFS13,EL09} or the mean-field Heisenberg model \cite{KM13}. The idea of our argument is to reformulate the IMD model as a CW model with additional weights described by the hard core interaction \eqref{hc}. For $N\geq 1,$ set $\Sigma =\{0,1\}^N$. Define a Hamiltonian
	\begin{align*}
	-H(\sigma)&=N(am  (\sigma)^2+bm  (\sigma))
	\end{align*}
	for any $\sigma=(\sigma_1,\ldots,\sigma_N  )\in\Sigma$,
	where $$m  (\sigma)=\frac{1}{N}\sum_{i=1}^N\sigma_i$$ is called the magnetization of the configuration $\sigma$. In addition, we denote by $\mathscr{A}(\sigma)$ the set of all sites $i\in V  $ with $\sigma_i=1$ and by ${D}(\sigma)$ the total number of admissible dimer configurations $D\in \mathscr{D}$ that satisfy $\mathscr{M}(D)=\mathscr{A}(\sigma).$ Using these notations, we introduce the Gibbs measure,
	\begin{align}\label{gibbs}
	\p(\sigma)&=\frac{D(\sigma)\exp(-H (\sigma))}{\sum_{\tau}D(\tau)\exp(-H(\tau))}.
	\end{align}
	In other words, this defines a weighted CW model on $\Sigma$ and more importantly, from the following identity
	\begin{align*}
	&\sum_{\sigma}1(|\mathscr{A}(\sigma)|=t)D(\sigma)\exp(-H  (\sigma))=\sum_{D}1(|\mathscr{M}(D)|=t)\exp(-{H} (D))
	\end{align*}
	for any $t=0,1,\ldots,N$, it satisfies  
	\begin{align}\label{eq}
	\p\Bigl(m  (\sigma)=\frac{t}{N}\Bigr)&=\p\Bigl(m  (D)=\frac{t}{N}\Bigr).
	\end{align}
	From this equation, to prove the limit theorems for the monomer density in the IMD model, it suffices to investigate the magnetization in the weighted CW model. We remark that while the space of all admissible dimer configurations $\mathscr{D}$ is not a product space, the hypercube $\Sigma$ has a nice product structure. The main difficulty one needs to handle here is the effects brought by the weights $D(\sigma)$. Indeed, following the equation of the hard core interaction \eqref{hc}, they are equal to zero if $N-|\mathscr{A}(\sigma)|$ is not even and they are given by a large combinatorial number (see \eqref{ce}) if $N-|\mathscr{A}(\sigma)|$ is even. Recall the scheme of Stein's method to establishing the limit theorems for the magnetization in the classical CW model \cite{CS11,CFS13,EL09}. One constructs the exchangeable pair for the sampled configuration $\sigma$ by choosing a site $i$ uniformly at random from $V$ and then replacing $\sigma_i$ by $\sigma_i'$, whose law follows the conditional distribution of $\sigma$ given $(\sigma_j)_{j\neq i}$ and is independent of $\sigma_i$. The key step is the conditional moment computations for the exchangeable pair, where the resulting equations are clearly related to the derivatives of the infinite volume limit of the free energy. In the present case, due to weights $D(\sigma)$, we shall construct the exchangeable pair for the sampled configuration $\sigma$ from the Gibbs measure \eqref{gibbs} by updating a pair of spins $(\sigma_i,\sigma_j)$ at a time rather than just a single spin. As one shall see, the relations between the corresponding conditional moment computations of the exchangeable pair and the derivatives of the infinite volume limit of the free energy now become very indirect. Several derivations are non-typical and more subtle computations are needed compared to those in the CW model.


	\smallskip
	\smallskip
	
	\noindent{\bf Acknowledgements.} The author thanks Pierluigi Contucci for several enlightening discussions on the monomer-dimer model and bringing the results in \cite{ACFM15} to his attention, which lead to the current work. The author is indebted to Qi-Man Shao for the fruitful discussions about the Stein's method and to the Department of Statistics in the Chinese University of Hong Kong for the hospitality during his visit. This research is supported by NSF grant DMS-1513605 and NSF-Simon Travel Grant 2014-2015.

	\section{Stein's method}	
	
	  In this section, we describe the formulation of Stein's method from \cite{CS11}. The exchangeable pairs we shall use in this paper will be constructed through the following general proposition.
	  
	  \begin{proposition}\label{prop0}
	  	Let $\zeta=(\zeta_1,\ldots,\zeta_N)$ be a $N$-dimensional random vector and $uv$ be sampled uniformly at random from $E.$ Let $(\zeta_u',\zeta_v')$ be the conditional distribution of $(\zeta_u,\zeta_v)$ given $(\zeta_i)_{i\neq u,v}$ and be independent of $(\zeta_u,\zeta_v).$ Set $\zeta'=(\zeta_1',\ldots,\zeta_N')$ with $\zeta_i'=\zeta_i$ for all $i\neq u,v$. Then $\zeta$ and $\zeta'$ are exchangeable. 
	  \end{proposition}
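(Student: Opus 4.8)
The plan is to verify the defining property of exchangeability head on: for every bounded measurable $f:\Reals^N\times\Reals^N\to\Reals$ one should check that $\e f(\zeta,\zeta')=\e f(\zeta',\zeta)$. The conceptual point is that after conditioning on the correct $\sigma$-algebra, the two ``blocks'' $(\zeta_u,\zeta_v)$ and $(\zeta_u',\zeta_v')$ are independent and identically distributed, so interchanging them changes nothing; integrating back over the (symmetric) choice of the edge $uv$ then finishes the argument.

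Concretely, I would condition on the sampled edge $uv$ and on the background vector $\zeta_{-uv}:=(\zeta_i)_{i\neq u,v}$. By the very definition of $(\zeta_u',\zeta_v')$ as the conditional law of $(\zeta_u,\zeta_v)$ given $(\zeta_i)_{i\neq u,v}$, chosen independent of $(\zeta_u,\zeta_v)$, the conditional joint law of the pair of blocks $\bigl((\zeta_u,\zeta_v),(\zeta_u',\zeta_v')\bigr)$ given $(uv,\zeta_{-uv})$ is a product $\mu_{uv}\otimes\mu_{uv}$, where $\mu_{uv}=\mu_{uv}(\cdot\mid\zeta_{-uv})$ is that regular conditional distribution (which exists since $\zeta$ takes values in a Polish space; in the application of interest $\zeta\in\{0,1\}^N$, so this is immediate). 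In particular this conditional law is invariant under swapping the two blocks. Next I would observe that both $\zeta$ and $\zeta'$ are the same explicit measurable function of the triple $\bigl(\zeta_{-uv},(\zeta_u,\zeta_v),(\zeta_u',\zeta_v')\bigr)$: writing $\Phi_{uv}(w,a)$ for the vector in $\Reals^N$ whose $u$th and $v$th coordinates are the two entries of $a$ and whose remaining coordinates are those of $w$, one has $\zeta=\Phi_{uv}(w,a)$ and $\zeta'=\Phi_{uv}(w,a')$ with $w=\zeta_{-uv}$, $a=(\zeta_u,\zeta_v)$, $a'=(\zeta_u',\zeta_v')$; here it is used that $\zeta'$ shares all background coordinates of $\zeta$. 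Hence
\[
(\zeta,\zeta')=\bigl(\Phi_{uv}(w,a),\Phi_{uv}(w,a')\bigr)\quad\text{and}\quad(\zeta',\zeta)=\bigl(\Phi_{uv}(w,a'),\Phi_{uv}(w,a)\bigr),
\]
so $(\zeta',\zeta)$ is obtained from $(\zeta,\zeta')$ by applying the same measurable map after exchanging $a$ and $a'$. Combining this with the symmetry of $\mu_{uv}\otimes\mu_{uv}$ shows that, conditionally on $(uv,\zeta_{-uv})$, the vectors $(\zeta,\zeta')$ and $(\zeta',\zeta)$ have the same distribution, i.e. $\e\bigl[f(\zeta,\zeta')\mid uv,\zeta_{-uv}\bigr]=\e\bigl[f(\zeta',\zeta)\mid uv,\zeta_{-uv}\bigr]$. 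Taking expectations, and noting that the uniform law of $uv$ on $E$ treats the two coordinates of $f$ completely symmetrically, yields $\e f(\zeta,\zeta')=\e f(\zeta',\zeta)$, which is the claim.

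I do not expect any real obstacle in this argument; it is a routine measure-theoretic verification. The only points requiring a moment of care are the existence and correct invocation of the regular conditional distribution $\mu_{uv}$ and the fact that the resampling leaves $\zeta_{-uv}$ untouched, so that the swap of blocks genuinely acts as the identity on the background. In the discrete setting $\Sigma=\{0,1\}^N$ used throughout this paper, both are transparent, and one could alternatively give the whole proof by an elementary summation over configurations.
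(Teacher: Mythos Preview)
Your argument is correct and follows essentially the same route as the paper's own proof: both condition on the chosen edge and the background coordinates $(\zeta_i)_{i\neq u,v}$, use that conditionally the pair $(\zeta_u,\zeta_v)$ and the resampled pair are i.i.d., and then take expectation over the edge and the background. The only cosmetic difference is that the paper checks the identity on product test functions $F(\zeta)G(\zeta')$ whereas you work with a general bounded measurable $f$ and spell out the map $\Phi_{uv}$ explicitly.
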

	  
	  \begin{proof}
	  	Let $F,G$ be any bounded measurable functions and denote by $\mathscr{F}_{uv}$ the $\sigma$-algebra generated by $(\zeta_i)_{i\neq u,v}.$ Note that conditioning on $\mathscr{F}_{uv}$, $(\zeta_u,\zeta_v)$ and $(\zeta_v',\zeta_v')$ are i.i.d.  Consequently, the exchangeablility of $\zeta$ and $\zeta'$ follows by
	  	\begin{align*}
	  	E[F(\zeta)G(\zeta')]&=\frac{1}{|E|}\sum_{uv\in E}E\bigl[E[F(\zeta)G(\zeta')|\mathscr{F}_{uv}]\bigr]\\
	  	&=\frac{1}{|E|}\sum_{uv\in E}E\bigl[E[F(\zeta)|\mathscr{F}_{uv}]E[G(\zeta')|\mathscr{F}_{uv}]\bigr]\\
	  	&=\frac{1}{|E|}\sum_{uv\in E}E\bigl[E[F(\zeta')|\mathscr{F}_{uv}]E[G(\zeta)|\mathscr{F}_{uv}]\bigr]\\
	  	&=E[F(\zeta')G(\zeta)].
	  	\end{align*}
	  \end{proof}
	  
	Suppose that 
	$g(t)$ is nondecreasing and $g(t)\geq 0$ for $t>0$ and $g(t)\leq 0$ for $t\leq 0.$
	Let $Z$ be a random variable with density
	\begin{align*}
	p(t)&=c_1e^{-c_0\int_0^tg(s)ds}
	\end{align*}
	for $t\in\mathbb{R}$, where $c_0>0$ and $c_1$ is the normalizing constant. Let $\Delta=W-W'.$ Then Stein's method for exchange pair yields the following Berry-Esseen type inequality.
	
	\begin{theorem}[Theorem 1.2 \cite{CS11}]
		\label{thm1} Let $(W,W')$ be an exchangeable pair. Assume that there exist two real-valued functions $g$ and $r$ on $\mathbb{R}$ such that
		\begin{align}\label{thm1:ass}
		E[W-W'|W]&=g(W)+r(W),
		\end{align}
		Suppose that there exists $c_2<\infty$ such that 
		\begin{align}\label{thm1:ass1}
		c_0|g'(x)|\Bigl(|x|+\frac{3}{c_1}\Bigr)\min\Bigl(\frac{1}{c_1},\frac{1}{|c_0g(x)|}\Bigr)\leq c_2,\,\,\forall x.
		\end{align}
		If $|\Delta|\leq \delta,$ then 
		\begin{align}
		\begin{split}\label{thm1:ineq}
		\sup_z |P(W\leq z)-P(Z\leq z)|
		&\leq 3E\Bigl|1-\frac{c_0}{2}E[\Delta^2|W]\Bigr|+\frac{2c_0}{c_1}E|r(W)|\\
		&+c_1\max(1,c_2)\delta+\delta^3c_0\Bigl\{\Bigl(2+\frac{c_2}{2}E|c_0g(W)|\Bigr)+\frac{c_1c_2}{2}\Bigr\}.
		\end{split}
		\end{align}
	\end{theorem}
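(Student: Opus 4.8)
The statement is Theorem 1.2 of \cite{CS11}, so I will only outline the Stein's-method argument one would run to prove it. The plan is to attach to the target density $p$ its Stein operator, solve the associated Stein equation for the Kolmogorov test functions, and then evaluate the resulting functional identity against the exchangeable pair $(W,W')$ using antisymmetry together with the structural hypothesis \eqref{thm1:ass}. Since $p(t)=c_1e^{-c_0\int_0^tg(s)ds}$ satisfies $p'(t)=-c_0g(t)p(t)$, the natural operator is $\mathcal{A}f(w)=f'(w)-c_0g(w)f(w)$. For fixed $z\in\mathbb{R}$ and $h_z=\indi\{\,\cdot\le z\,\}$, let $f_z$ be the bounded solution of $\mathcal{A}f_z=h_z-P(Z\le z)$, i.e.
\begin{align*}
f_z(w)=\frac{1}{p(w)}\int_{-\infty}^w\bigl(h_z(x)-P(Z\le z)\bigr)p(x)\,dx,
\end{align*}
so that $P(W\le z)-P(Z\le z)=E\bigl[f_z'(W)-c_0g(W)f_z(W)\bigr]$ and everything reduces to estimating the right-hand side.

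The next step is the Stein-factor bookkeeping. Using that $g$ is nondecreasing with the stated sign pattern (hence $\int_0^tg\ge 0$ and $p$ is unimodal at $0$ with $\|p\|_\infty=c_1$), the explicit formula gives $f_z\ge 0$ together with pointwise bounds of the type $f_z(w)\le 3/c_1$ and $f_z(w)\le 1/(c_0|g(w)|)$, a crude global bound $f_z(w)\le |w|+3/c_1$, and — via $f_z'=h_z-P(Z\le z)+c_0gf_z$ — the estimate $\|f_z'\|_\infty\le 3$; the precise constants $2,3$ appearing in \eqref{thm1:ineq} come out of this computation. The hypothesis \eqref{thm1:ass1} is exactly what one needs to control the \emph{second-order} Stein factor: it bounds the quantities $c_0\bigl|(gf_z)'(x)\bigr|$-type terms (which involve $c_0g'(x)f_z(x)$ and $c_0g(x)f_z'(x)$) by $c_2$ on the range where they will enter, despite $g$ being unbounded.

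Now for the core identity. Because $(W,W')$ is exchangeable, $E[A(W,W')]=0$ for every antisymmetric $A$; applying this to $A(w,w')=(w-w')\bigl(f_z(w)+f_z(w')\bigr)$ gives $E[(W-W')f_z(W)]=\tfrac12E\bigl[(W-W')(f_z(W)-f_z(W'))\bigr]$, while conditioning on $W$ and using \eqref{thm1:ass} gives $E[(W-W')f_z(W)]=E[f_z(W)(g(W)+r(W))]$. Writing $f_z(W)-f_z(W')=\int_{W'}^Wf_z'(s)\,ds$, splitting off the leading term $(W-W')f_z'(W)$, and rearranging (with $\Delta=W-W'$) yields
\begin{align*}
P(W\le z)-P(Z\le z)
&=E\Bigl[f_z'(W)\Bigl(1-\tfrac{c_0}{2}E[\Delta^2\mid W]\Bigr)\Bigr]\\
&\quad-\tfrac{c_0}{2}E\Bigl[(W-W')\!\int_{W'}^{W}\!\bigl(f_z'(s)-f_z'(W)\bigr)ds\Bigr]+c_0\,E\bigl[r(W)f_z(W)\bigr].
\end{align*}
The first term is bounded by $\|f_z'\|_\infty E\bigl|1-\tfrac{c_0}{2}E[\Delta^2\mid W]\bigr|$, giving the $3E|1-\tfrac{c_0}{2}E[\Delta^2\mid W]|$ contribution; the last by $c_0\|f_z\|_\infty E|r(W)|$, giving $\tfrac{2c_0}{c_1}E|r(W)|$. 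In the middle term one substitutes $f_z'(s)-f_z'(W)=\bigl(h_z(s)-h_z(W)\bigr)+c_0\bigl(g(s)f_z(s)-g(W)f_z(W)\bigr)$: the indicator part is supported on $\{W\in[z-\delta,z+\delta]\}$ (as $|\Delta|\le\delta$) and contributes $\le\tfrac{c_0}{2}\delta^2P(|W-z|\le\delta)$, which after bounding $P(|W-z|\le\delta)$ by $2\delta c_1+2\sup_z|P(W\le z)-P(Z\le z)|$ and absorbing the self-referential piece produces $c_1\max(1,c_2)\delta$; the smooth part is handled by a second-order Taylor estimate of $gf_z$ controlled through \eqref{thm1:ass1} and $E|c_0g(W)|$, yielding the $\delta^3c_0\{\cdots\}$ contribution. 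Taking $\sup_z$ gives \eqref{thm1:ineq}.

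The delicate point — and the main obstacle — is the middle term. Since $f_z'$ carries a unit jump at the point $z$, one must quantify how strongly $W$ concentrates near that single point, and the only available leverage is the target density $p$; this forces a mild circular dependence on $\sup_z|P(W\le z)-P(Z\le z)|$ which must be fed back and absorbed (this is why the bound is only linear in $\delta$ and why $c_1$ enters there). Simultaneously one must keep the second-order Stein factor of the smooth part $gf_z$ bounded even though $g$ need not be — which is precisely the purpose of the somewhat unusual hypothesis \eqref{thm1:ass1} and of carrying $E|c_0g(W)|$ as an explicit quantity in the final estimate.
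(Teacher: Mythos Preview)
The paper does not prove this statement at all: it is quoted verbatim as Theorem~1.2 of \cite{CS11} and then used as a black box in the proofs of Lemma~\ref{lem4} and Theorem~\ref{thm4}. So there is no ``paper's own proof'' to compare against; your outline is effectively a sketch of the Chatterjee--Shao argument itself.

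As a sketch, the architecture is right: set up the Stein operator for $p$, solve the Stein equation for indicator test functions, expand via the exchangeable-pair antisymmetry, and split into the variance term, the remainder term, and the second-order increment of $f_z'$. The Stein-factor bounds you list (in particular $0\le f_z\le \min(1/c_1,1/|c_0g|)$ and the role of \eqref{thm1:ass1} in controlling $c_0(g f_z)'$) are also the correct ingredients.

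The one place your outline does not match \cite{CS11} and would run into trouble is the derivation of the linear term $c_1\max(1,c_2)\,\delta$. You propose to bound the jump contribution by $\tfrac{c_0}{2}\delta^2\,P(|W-z|\le\delta)$, then estimate $P(|W-z|\le\delta)\le 2c_1\delta+2\sup_z|P(W\le z)-P(Z\le z)|$ and absorb the self-referential piece. That absorption requires $c_0\delta^2$ to be small, but in the applications of this paper $c_0\asymp N^{(2k+1)/(k+1)}$ and $\delta=2N^{-(2k+1)/(2k+2)}$, so $c_0\delta^2$ is of order one and cannot be absorbed; moreover the surviving piece would be $O(c_0c_1\delta^3)$, not $c_1\max(1,c_2)\delta$. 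In \cite{CS11} this term instead comes from an \emph{a priori} concentration inequality for $W$ (their Proposition~3.2), proved separately from the exchangeable-pair identity using \eqref{thm1:ass} and \eqref{thm1:ass1}; that inequality gives $P(a\le W\le b)\le c_1(b-a)+\text{(lower order)}$ directly, which is where the $c_1\max(1,c_2)\delta$ contribution and the $c_2$-dependence actually originate. Aside from this point your sketch is a faithful summary of the method.
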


	\section{Moment computations away from the critical line $\Gamma$}   
    Throughout the rest of the paper, we shall use $K$ to stand for a positive constant that is independent of $N$ and could be different at each occurrence. 
	For any $\tau\in\Sigma$, $uv\in E $ and $s,t=0,1,$ we use the notation $\tau_{uv}^{st}$ to denote the configuration $\rho\in\Sigma$ that satisfies $\rho_i=\tau_i$ for all $i\neq u,v$ and $\rho_u=s$ and $\rho_v=t$. Let us sample $\sigma$ from $\p $ and let $uv$ be sampled uniformly at random from $E$. We define $(\sigma_u',\sigma_v')$ as the conditional distribution of $(\sigma_u,\sigma_v)$ given $(\sigma_i)_{i\neq u,v}$ and independent of $(\sigma_u,\sigma_v).$ In other words,
	\begin{align*}
	\p(\sigma_u'=s,\sigma_v'=t|\sigma)&=\frac{\p(\sigma_{uv}^{st})}{\p(\sigma_{uv}^{11})+\p(\sigma_{uv}^{10})+\p(\sigma_{uv}^{01})+\p(\sigma_{uv}^{00})}.
	\end{align*}
	Note that any dimer configuration $D$ with $\mathscr{M}(D)=\mathscr{A}(\sigma)$ satisfies the equation of the hard core interaction,
	$$
	2|D|+\sum_{i=1}^N\sigma_i=N,
	$$
	which deduces that 
	\begin{align*}
	D(\sigma_{uv}^{10})&=D(\sigma_{uv}^{01})=0,\,\,\mbox{if $\sigma_u=\sigma_v=1$ or $\sigma_u=\sigma_v=0$},\\
	D(\sigma_{uv}^{11})&=D(\sigma_{uv}^{00})=0,\,\,\mbox{if $\sigma_u=1,\sigma_v=0$ or $\sigma_u=0,\sigma_v=1.$}
	\end{align*} 
	Consequently, if $\sigma_u=\sigma_v=1,$ 
	\begin{align}
	\begin{split}
	\label{A}
	\p  (\sigma_u'=\sigma_v'=1|\sigma)&=\frac{\p  (\sigma_{uv}^{11})}{\p  (\sigma_{uv}^{11})+\p  (\sigma_{uv}^{00})}
	=\frac{D(\sigma)}{D(\sigma)+D(\sigma_{uv}^{00})e^{-4a  m  (\sigma)+4a  /N-2b  }},\\
	\p  (\sigma_u'=\sigma_v'=0|\sigma)&=\frac{\p  (\sigma_{uv}^{00})}{\p  (\sigma_{uv}^{11})+\p (\sigma_{uv}^{00})}
	=\frac{D(\sigma_{uv}^{00})e^{-4a  m  (\sigma)+4a  /N-2b  }}{D(\sigma)+D(\sigma_{uv}^{00})e^{-4a  m  (\sigma)+4a  /N-2b  }},
		\end{split}
	\end{align}
	if $\sigma_u=\sigma_v=0$,
	\begin{align}
	\begin{split}
	\label{B}
	\p (\sigma_u'=1,\sigma_v'=1|\sigma)&=\frac{\p  (\sigma_{uv}^{11})}{\p  (\sigma_{uv}^{11})+\p  (\sigma_{uv}^{00})}
	=\frac{D(\sigma_{uv}^{11})e^{4am(\sigma)+4a/N+2b}}{D(\sigma)+D(\sigma_{uv}^{11})e^{4am(\sigma)+4a/N+2b}},\\
	\p  (\sigma_u'=0,\sigma_v'=0|\sigma)&=\frac{\p (\sigma_{uv}^{00})}{\p(\sigma_{uv}^{11})+\p (\sigma_{uv}^{00})}
	=\frac{D(\sigma)}{D(\sigma)+D(\sigma_{uv}^{11})e^{4am(\sigma)+4a/N+2b}},
		\end{split}
	\end{align}
	and if $\sigma_u=1,\sigma_v=0$,
	\begin{align}
	\begin{split}
	\label{C}
	\p (\sigma_u'=1,\sigma_v'=0|\sigma)&=\frac{\p  (\sigma_{uv}^{10})}{\p  (\sigma_{uv}^{10})+\p(\sigma_{uv}^{01})}=\frac{D(\sigma_{uv}^{10})}{D(\sigma_{uv}^{10})+D(\sigma_{uv}^{01})}=\frac{1}{2},\\
	\p  (\sigma_u'=0,\sigma_v'=1|\sigma)&=\frac{\p  (\sigma_{uv}^{01})}{\p  (\sigma_{uv}^{10})+\p (\sigma_{uv}^{01})}=\frac{D(\sigma_{uv}^{01})}{D(\sigma_{uv}^{10})+D(\sigma_{uv}^{01})}=\frac{1}{2}.
		\end{split}
	\end{align}
	To sum up, if $u,v$ are either both monomers or both vertices of some dimers, then they only could be updated as either both monomers or both vertices of some dimers. If only one of $u,v$ is a monomer, then after the update they again has only one monomer.
	Let $\sigma'$ be the random vector obtained by replacing $\sigma_u$ and $\sigma_v$ by $\sigma_u'$ and $\sigma_v'$, respectively. From Proposition \ref{prop0}, $\sigma$ and $\sigma'$ are exchangeable. The proposition below will play an essential role to control the Berry-Esseen bounds \eqref{thm1:ineq}.

	\begin{proposition}\label{lem1} Let $M=\sum_{i=1}^N\sigma_i$ and $M'=\sum_{i=1}^N\sigma_i'.$ We have that 
		\begin{align}
		\begin{split}\label{lem1:eq1}
		\e[M-M'|\sigma]&=L_1(m(\sigma))+R_1(m(\sigma)),
		\end{split}\\
		\begin{split} \label{lem2:eq1}
		\e[(M-M')^2|\sigma]&=L_2(m(\sigma))+R_2(m(\sigma)),
		\end{split}
		\end{align}
		where recalling $\tau(m)$ from \eqref{tau}, $L_1,L_2,R_1,R_2$ satisfy that for all $m\in[0,1],$
		\begin{align*}
		&L_1(m)=\frac{2(1-m)(m^2-(1-m)e^{2\tau(m)})}{(1-m)+e^{2\tau(m)}},\\
		&L_2(m)=\frac{4(1-m)(m^2+(1-m)e^{2\tau(m)})}{(1-m)+e^{2\tau(m)}},\\
		&|R_1(m)|,\,\,|R_2(m)|\leq \frac{K}{N}
		\end{align*}
		for some constant $K>0.$ 
	\end{proposition}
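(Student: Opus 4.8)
The plan is to compute the two conditional moments directly from the transition probabilities \eqref{A}--\eqref{C}, with the main work being an asymptotic analysis of the combinatorial weights $D(\sigma)$. First I would record the explicit formula for $D(\sigma)$: if $N-|\mathscr{A}(\sigma)|=2k$ then $D(\sigma)$ counts the number of perfect matchings on $2k$ vertices of a complete graph, which is $(2k-1)!!=(2k)!/(2^k k!)$; otherwise $D(\sigma)=0$. Writing $M=|\mathscr{A}(\sigma)|$, the key ratios appearing in \eqref{A}--\eqref{B} are of the form $D(\sigma_{uv}^{00})/D(\sigma)$ and $D(\sigma_{uv}^{11})/D(\sigma)$. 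Since passing from $\sigma$ to $\sigma_{uv}^{00}$ changes the number of monomers from $N-M$ to $N-M+2$ (hence $k\mapsto k+1$), and passing to $\sigma_{uv}^{11}$ does the reverse, these ratios are $(2k+1)$ and $1/(2k-1)$ respectively where $2k=N-M$. In particular, with $m=m(\sigma)=M/N$ one has $2k=N(1-m)$, so $D(\sigma_{uv}^{00})/D(\sigma)=N(1-m)+1$ and $D(\sigma)/D(\sigma_{uv}^{11})=N(1-m)-1$, each of size $\Theta(N)$ on the relevant event.

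Next I would substitute these into the conditional law. Conditioning on the event $\{\sigma_u=\sigma_v=1\}$, equation \eqref{A} gives the chance that the pair becomes $(0,0)$ as
\begin{align*}
\frac{D(\sigma_{uv}^{00})e^{-4am(\sigma)+4a/N-2b}}{D(\sigma)+D(\sigma_{uv}^{00})e^{-4am(\sigma)+4a/N-2b}},
\end{align*}
and recalling $a=J$, $b=\tfrac{\log N}{2}+h-J$, we have $e^{-2b}=N^{-1}e^{-2h+2J}$, so $D(\sigma_{uv}^{00})e^{-2b}\sim (1-m)e^{-2h+2J}$ as $N\to\infty$, up to a multiplicative $1+O(1/N)$ error and with the $e^{4a/N}$ factor also contributing only $1+O(1/N)$. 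Combining with $e^{-4am}=e^{-4Jm}$ and $\tau(m)=(2m-1)J+h$, the exponential collapses to $e^{-2\tau(m)}\cdot(\text{const})$ in the right way so that the limiting transition probability on this event is $\tfrac{(1-m)e^{2\tau(m)}}{(1-m)e^{2\tau(m)}+m^2}$-type expression after clearing denominators; the analogous computation on $\{\sigma_u=\sigma_v=0\}$ produces the complementary structure, and on $\{\sigma_u\ne\sigma_v\}$ the transition is exactly uniform by \eqref{C}. I would then assemble $\e[M-M'|\sigma]$ by summing, over the uniformly chosen edge $uv$, the expected change $\e[(\sigma_u+\sigma_v)-(\sigma_u'+\sigma_v')\,|\,\sigma]$: the three cases are weighted by the empirical fractions of edges of each type, namely $\binom{M}{2}/\binom{N}{2}\approx m^2$, $\binom{N-M}{2}/\binom{N}{2}\approx (1-m)^2$, and $M(N-M)/\binom{N}{2}\approx 2m(1-m)$, again with $O(1/N)$ corrections. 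The mixed type contributes $0$ to the first moment. Bookkeeping the main terms and checking they match $L_1(m)$, $L_2(m)$ after algebraic simplification (multiplying numerator and denominator through by suitable powers and using $2k=N(1-m)$) is the bulk of the computation; every discarded term is $O(1/N)$ uniformly in $m\in\{0,1/N,\dots,1\}$, which is exactly the claimed bound on $R_1,R_2$.

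The main obstacle I anticipate is controlling the error terms \emph{uniformly in $m$}, in particular near the boundary $m=1$ (equivalently $k$ small or $k=0$), where the ratio $D(\sigma_{uv}^{11})/D(\sigma)$ degenerates: if $N-M\in\{0,1\}$ there are no dimers to break, so the $(1,1)\to(1,1)$ transition from a $(0,0)$ pair is simply impossible and the formula must be read with $D(\sigma_{uv}^{11})=0$. One must check that in these edge cases the functions $L_1,L_2$ still give the correct values up to $O(1/N)$ — indeed at $m=1$ one has $L_1(1)=L_2(1)=0$, consistent with there being no monomers to move — and that the fraction of edges of the relevant type is itself $O(1/N)$ there, so the contribution is absorbed into $R_1,R_2$ regardless. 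Similarly near $m=0$ the weight $D(\sigma)$ itself is a large combinatorial number $(N-1)!!$ and one should verify no division-by-small-number issue arises. A clean way to handle all of this at once is to observe that the quantities $\e[M-M'|\sigma]$ and $\e[(M-M')^2|\sigma]$ are each bounded by an absolute constant (since $|M-M'|\le 2$), so the identities \eqref{lem1:eq1}--\eqref{lem2:eq1} only need verification as exact rational-function identities in the variable $2k=N(1-m)$, after which the passage to $L_1,L_2$ with an $O(1/N)$ remainder is a one-line Taylor estimate using $|e^{4a/N}-1|\le K/N$ and $|(N(1-m)+1)/(N(1-m)) - 1|\le K/N$ on the support. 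I would organize the write-up so that the exact finite-$N$ formulas come first, then a single lemma-style estimate on the three families of perturbations feeds into all the remainder bounds simultaneously.
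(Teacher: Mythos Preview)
Your proposal is correct and follows essentially the same route as the paper: compute the matching-count ratios $D(\sigma_{uv}^{00})/D(\sigma)=L+1$ and $D(\sigma_{uv}^{11})/D(\sigma)=1/(L-1)$ with $L=N-M$, split the edge sum into the three types, note the mixed type vanishes, and extract the main term with $O(1/N)$ remainder. The one difference worth adopting is that the paper packages the exact conditional moment as a single smooth function $U_k(m,t)$ evaluated at $t=1/N$, then writes $R_k(m)=\int_0^{1/N}\partial_t U_k(m,t)\,dt$; since the denominators of $U_k$ are bounded below uniformly in $m\in[0,1]$ for small $t$, this gives the uniform $|R_k|\le K/N$ bound in one stroke and dissolves the boundary-case worries you raised.
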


	\begin{proof} Let $k=1,2.$ Consider
		\begin{align*}
		\e[(M-M')^k|\sigma]&=\frac{1}{|E|}\sum_{uv\in E}\e[(\sigma_u+\sigma_v-\sigma_u'-\sigma_v')^k|\sigma]\\
		&=\frac{1}{|E|}\sum_{u,v\in \mathscr{A}(\sigma):u<v}\e[(\sigma_u+\sigma_v-\sigma_u'-\sigma_v')^k|\sigma]\\
		&+\frac{1}{|E|}\sum_{u\in \mathscr{A}(\sigma),v\notin \mathscr{A}(\sigma)}\e[(\sigma_u+\sigma_v-\sigma_u'-\sigma_v')^k|\sigma]\\
		&+\frac{2}{|E|}\sum_{u,v\notin \mathscr{A}(\sigma):u<v}\e[(\sigma_u+\sigma_v-\sigma_u'-\sigma_v')^k|\sigma].
		\end{align*}
		We compute each summation as follows.
		For $u\in \mathscr{A}(\sigma)$ and $v\notin \mathscr{A}(\sigma),$ since $(\sigma_u',\sigma_v')$ could only be either $(1,0)$ or $(0,1)$ from \eqref{C}, it follows that
		\begin{align*}
		\e[(\sigma_u+\sigma_v-\sigma_u'-\sigma_v')^k|\sigma]&=\e[(1-1)^k|\sigma]=0.
		\end{align*}
		To compute the first and third summations, note that the total number of $L/2$ dimers on a complete graph of size $L$ can be computed as
		\begin{align}\label{ce}
		\frac{1}{(L/2)!}{L\choose 2}{L-2\choose 2}\cdots {L-2(L/2-1)\choose 2}=\frac{L!}{(L/2)!}2^{-L/2}.
		\end{align}
		Take $L=N-|\mathscr{A}(\sigma)|.$
		For $u,v\in \mathscr{A}(\sigma)$ with $u<v$, since $(\sigma_u',\sigma_v')$ could only be either $(1,1)$ or $(0,0)$ from \eqref{A}, this yields
		\begin{align*}
		\e[(\sigma_u+\sigma_v-\sigma_u'-\sigma_v')^k|\sigma]&=2^k\p(\sigma_u'=0,\sigma_v'=0|\sigma)\\
		&=\frac{2^kD(\sigma_{uv}^{00})e^{-4am(\sigma)+4a/N-2b}}{D(\sigma)+D(\sigma_{uv}^{00})e^{-4am(\sigma)+4a/N-2b}}\\
		&=\frac{2^k\frac{(L+2)!}{(L/2+1)!}2^{-(L/2+1)}e^{-4am(\sigma)+4a/N-2b}}{\frac{L!}{(L/2)!}2^{-L/2}+\frac{(L+2)!}{(L/2+1)!}2^{-(L/2+1)}e^{-4am(\sigma)+4a/N-2b}}\\
		&=\frac{2^k(L+1)e^{-4am(\sigma)+4a/N-2b}}{1+(L+1)e^{-4am(\sigma)+4a/N-2b}},
		\end{align*}
		while for $u,v\notin \mathscr{A}(\sigma)$ with $u<v$, since $(\sigma_u',\sigma_v')$ could only be either $(1,1)$ and $(0,0)$ from \eqref{B}, we conclude
		\begin{align*}
		\e[(\sigma_u+\sigma_v-\sigma_u'-\sigma_v')^k|\sigma]&=(-2)^k\p(\sigma_u'=1,\sigma_v'=1|\sigma)\\
		&=\frac{(-2)^kD(\sigma_{uv}^{11})e^{4am(\sigma)+4a/N+2b}}{D(\sigma)+D(\sigma_{uv}^{11})e^{4am(\sigma)+4a/N+2b}}\\
		&=\frac{(-2)^k\frac{(L-2)!}{(L/2-1)!}2^{-(L/2-1)}e^{4am(\sigma)+4a/N+2b}}{\frac{L!}{(L/2)!}2^{-L/2}+\frac{(L-2)!}{(L/2-1)!}2^{-(L/2-1)}e^{4am(\sigma)+4a/N+2b}}\\
		&=\frac{(-2)^ke^{4am(\sigma)+4a/N+2b}}{(L-1)+e^{4am(\sigma)+4a/N+2b}}.
		\end{align*}
		Combining these two equations together, we obtain
		\begin{align*}
		&\e[(M-M')^k|\sigma]\\
		&=\frac{1}{|E|}\left({|\mathscr{A}(\sigma)|\choose 2}\frac{2^k(L+1)e^{-4am(\sigma)+4a/N-2b}}{1+(L+1)e^{-4am(\sigma)+4a/N-2b}}+{|\mathscr{A}(\sigma)^c|\choose 2}\frac{(-2)^ke^{4am(\sigma)+4a/N+2b}}{(L-1)+e^{4am(\sigma)+4a/N+2b}}\right)\\
		&=\frac{2^{k-1}}{|E|}\Bigl(\frac{M(\sigma)(M(\sigma)-1)(L+1)e^{-4am(\sigma)+4a/N-2b}}{1+(L+1)e^{-4am(\sigma)+4a/N-2b}}+\frac{(-1)^kL(L-1)e^{4am(\sigma)+4a/N+2b}}{(L-1)+e^{4am(\sigma)+4a/N+2b}}\Bigr)\\
		&=\frac{2^{k-1}N^2}{|E|}\cdot\frac{m(\sigma)(m(\sigma)-1/N)(1-m(\sigma)+1/N)e^{-4am(\sigma)+4a/N-2b}}{1/N+(1-m(\sigma)+1/N)e^{-4am(\sigma)+4a/N-2b}}\\
		&\quad+\frac{(-1)^k2^{k-1}N}{|E|}\cdot\frac{(1-m(\sigma))(1-m(\sigma)-1/N)e^{4am(\sigma)+4a/N+2b}}{(1-m(\sigma)-1/N)+e^{4am(\sigma)+4a/N+2b}/N}.
		\end{align*}
		Substituting $a=J$, $e^{2b}=e^{\log N+2h-2J}=Ne^{2h-2J}$ and $|E|=N(N-1)/2$ into this equation gives
		\begin{align*}
		\e[(M-M')^k|\sigma]
		&=\frac{2^{k}}{(1-1/N)}\cdot\frac{m(\sigma)(m(\sigma)-1/N)(1-m(\sigma)+1/N)e^{-2\tau(m(\sigma))+4J/N}/N}{1/N+(1-m(\sigma)+1/N)e^{-2\tau(m(\sigma))+4J/N}/N}\\
		&\quad+\frac{(-2)^k}{N(1-1/N)}\cdot\frac{(1-m(\sigma))(1-m(\sigma)-1/N)e^{2\tau(m(\sigma))+4J/N}N}{(1-m(\sigma)-1/N)+e^{2\tau(m(\sigma))+4J/N}}\\
		&=U_k(m(\sigma),1/N),
		\end{align*}
		where for $0\leq m\leq 1$ and small $t$,
		\begin{align*}
		U_k(m,t)&:=\frac{2^k}{(1-t)}\cdot\frac{m(m-t)(1-m+t)e^{-2\tau(m)+4Jt}}{1+(1-m+t)e^{-2\tau(m)+4Jt}}\\
		&+\frac{(-2)^k}{(1-t)}\cdot\frac{(1-m)(1-m-t)e^{2\tau(m)+4Jt}}{(1-m-t)+e^{2\tau(m)+4Jt}}.
		\end{align*}
		Note that 
		\begin{align*}
		U_k(m,0)&=\frac{2^km^2(1-m)e^{-2\tau(m)}}{1+(1-m)e^{-2\tau(m)}}+\frac{(-2)^k(1-m)^2e^{2\tau(m)}}{(1-m)+e^{2\tau(m)}}\\
		&=\frac{2^k(m^2(1-m)+(-1)^k(1-m)^2e^{2\tau(m)})}{(1-m)+e^{2\tau(m)}}\\
		&=L_k(m).
		\end{align*}
		Letting $R_k(m)=\int_0^{1/N}\partial_tU_k(m,t)dt$, we have by the fundamental theorem of calculus,
		\begin{align*}
		\e[(M-M')^k|\sigma]&=L_k(m(\sigma))+R_k(m(\sigma)),
		\end{align*}
		where since the numerators in $U_k$ stays away from zero, there exists some $K$ such that $|R_k(m)|\leq K/N.$ This finishes our proof.  
	\end{proof}

	\section{Proof of Theorem \ref{mainthm}} 
    Suppose that $m_0$ is the unique maximizer of \eqref{fg}. Recall $M,M'$ from Proposition \ref{lem1}. For $k=0$ or $1$, we set 
	$$
	W=\frac{M-Nm_0}{N^{(2k+1)/(2k+2)}}\,\,\mbox{and}\,\,W'=\frac{M'-Nm_0}{N^{(2k+1)/(2k+2)}}.
	$$
	From the previous section, it is easy to see that $(W,W')$ is exchangeable. The following lemma is the central ingredient of our argument.
	
	\begin{lemma}
		\label{lem3}
			Suppose that $m_0$ is the unique maximizer of \eqref{fg} and that for some integer $k=0$ or $1$, $$
			L_1^{(\ell)}(m_0)=0
			$$
			for all $0\leq \ell\leq 2k$
			and
			\begin{align*}
			L_1^{(2k+1)}(m_0)> 0.
			\end{align*}
		We have that
		\begin{align}\label{lem3:eq1}
		\e[W-W'|W]&=g(W)+r(W),
		\end{align}
		where \begin{align}\label{eq7}
		g(W)&=\frac{L_1^{(2k+1)}(m_0)}{(2k+1)!N^{(2k+1)/(k+1)}}W^{2k+1}
		\end{align}
		and $r$ is the remainder term satisfying 
		\begin{align}\label{eq3}
		|r(W)|\leq\frac{K}{N^{(4k+3)/(2k+2)}}\Bigl(W^{2k+2}+1\Bigr).
		\end{align}
		In addition,
		\begin{align}
		\label{lem3:eq2}
		\e\Bigl|1-\frac{c_0}{2}\e[(W-W')^2|W]\Bigr|&\leq K\Bigl(\frac{1}{N^{1/(2k+2)}}+\frac{1}{N}\Bigr),
		\end{align}
		where \begin{align}
		\label{c_0}
		c_0&=\frac{2N^{(2k+1)/(k+1)}}{L_2(m_0)}.
		\end{align}
		
	\end{lemma}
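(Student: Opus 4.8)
The plan is to expand the conditional-moment identities from Proposition \ref{lem1} in a Taylor series around $m_0$, translate the expansions into the rescaled variable $W$, and read off $g$, $r$, and the $c_0$-normalization. First I would observe that $M = Nm(\sigma)$, so $m(\sigma) = m_0 + N^{(2k+1)/(2k+2)}W/N = m_0 + W/N^{1/(2k+2)}$; write $\delta := W/N^{1/(2k+2)}$, so $|\delta|$ is small on the bulk of the distribution. Starting from \eqref{lem1:eq1}, $\e[M-M'|\sigma] = L_1(m_0+\delta) + R_1(m_0+\delta)$. Taylor-expanding $L_1$ about $m_0$ and using the hypotheses $L_1^{(\ell)}(m_0)=0$ for $0\le\ell\le 2k$ kills the low-order terms, leaving
\begin{align*}
L_1(m_0+\delta) = \frac{L_1^{(2k+1)}(m_0)}{(2k+1)!}\,\delta^{2k+1} + O(\delta^{2k+2}).
\end{align*}
Dividing by the scaling factor $N^{(2k+1)/(2k+2)}$ to pass from $M-M'$ to $W-W'$, the leading term becomes $\frac{L_1^{(2k+1)}(m_0)}{(2k+1)!}\,\delta^{2k+1}/N^{(2k+1)/(2k+2)} = \frac{L_1^{(2k+1)}(m_0)}{(2k+1)!}\,W^{2k+1}/N^{(2k+1)/(k+1)}$, which is exactly \eqref{eq7}; the $O(\delta^{2k+2})$ Taylor remainder contributes $O\bigl(W^{2k+2}/N^{(2k+2)/(2k+2)}\cdot N^{-(2k+1)/(2k+2)}\bigr) = O\bigl(W^{2k+2}N^{-(4k+3)/(2k+2)}\bigr)$, and the $R_1$ term contributes $O(N^{-1}\cdot N^{-(2k+1)/(2k+2)}) = O(N^{-(4k+3)/(2k+2)})$, so together they match the bound \eqref{eq3}. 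I would be careful here that the conditioning is on $W$ rather than on $\sigma$, but since $W$ is a deterministic function of $m(\sigma)$ and all the expressions above are already functions of $m(\sigma)$ alone, $\e[\,\cdot\,|W] = \e[\,\cdot\,|\sigma]$ on that event, so no extra work is needed.

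For \eqref{lem3:eq2} I would similarly start from \eqref{lem2:eq1}: $\e[(M-M')^2|\sigma] = L_2(m_0+\delta) + R_2(m_0+\delta)$, and after dividing by $N^{2(2k+1)/(2k+2)} = N^{(2k+1)/(k+1)}$ to convert to $(W-W')^2$, we get $\e[(W-W')^2|W] = \bigl(L_2(m_0) + L_2'(m_0)\delta + O(\delta^2) + R_2\bigr)/N^{(2k+1)/(k+1)}$. With $c_0 = 2N^{(2k+1)/(k+1)}/L_2(m_0)$ as in \eqref{c_0}, the constant term gives $\frac{c_0}{2}\e[(W-W')^2|W] = 1 + \frac{L_2'(m_0)}{L_2(m_0)}\delta + O(\delta^2) + O(N^{-1})$. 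Hence $\bigl|1-\frac{c_0}{2}\e[(W-W')^2|W]\bigr| \le K(|\delta| + \delta^2 + N^{-1})$. Taking expectation and using $\e|\delta| = \e|W|/N^{1/(2k+2)}$ together with the (uniform, $N$-independent) bound on $\e|W|$ and $\e W^2$ — which follows from $W$ being a bounded random variable, $|W|\le N/N^{(2k+1)/(2k+2)} = N^{1/(2k+2)}$ is not enough, but a crude second-moment estimate via $\e[(W-W')^2|W]\asymp 1$ and the concentration of $m(\sigma)$ at $m_0$ gives $\e W^2 = O(1)$ — yields \eqref{lem3:eq2}. I should note that $L_2(m_0) > 0$ must be checked so that $c_0$ is well-defined and positive; from the closed form $L_2(m) = 4(1-m)(m^2+(1-m)e^{2\tau(m)})/((1-m)+e^{2\tau(m)})$ this is manifest as long as $m_0 \in (0,1)$, which holds for the maximizer.

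The main obstacle, I expect, is not the Taylor bookkeeping but rather justifying the uniform moment bounds $\e W^2 = O(1)$ (and indeed $\e W^{2k+2} = O(1)$, needed to make the remainder term \eqref{eq3} useful downstream in Theorem \ref{thm1}). A priori $W$ ranges over an interval of length $\sim N^{1/(2k+2)}$, so the bound must come from the measure's concentration near $m_0$; the clean way is to feed the already-established expansions back into Theorem \ref{thm1} in a bootstrap, or—more self-contained—to use the exchangeable-pair identity $\e[(M-M')^2]=2\e[(M-M')(M - \e[M'|\sigma])]$ combined with \eqref{lem1:eq1}--\eqref{lem2:eq1} to extract an a priori tail/variance estimate on $M - Nm_0$. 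A secondary subtlety is that when $k=1$ the hypothesis demands $L_1^{(3)}(m_0) > 0$ with $L_1'(m_0)=L_1''(m_0)=0$; one must make sure the relevant sign (here encoded through $\tilde p^{(4)}(m_c) < 0$ in the critical case) is consistent, but that is a compatibility check to be carried out where the lemma is applied, not within the proof of the lemma itself.
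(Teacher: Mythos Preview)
Your proposal is correct and follows essentially the same route as the paper: Taylor-expand $L_1$ about $m_0$ to order $2k+1$ with integral remainder, rescale by $N^{(2k+1)/(2k+2)}$ to pass from $M-M'$ to $W-W'$, and likewise expand $L_2$ to first order for the second-moment identity. The paper's only cosmetic difference is that it writes $L_2(m(\sigma))-L_2(m_0)=\int_{m_0}^{m(\sigma)}L_2'(s)\,ds$ rather than carrying an explicit $O(\delta^2)$, but this is equivalent since $|\delta|\le 1$.

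Your identification of the moment bound $\e|W|=O(1)$ (more generally $\e W^{2k+2}=O(1)$) as the real obstacle is exactly right, and worth flagging: the paper's own proof of Lemma~\ref{lem3} in fact stops at the line $\le K\bigl(\e|W|/N^{1/(2k+2)}+1/N\bigr)$ and tacitly defers the bound on $\e|W|$ to the subsequent Lemma~\ref{lem5}. That lemma is proved precisely by the exchangeable-pair trick you mention---multiply \eqref{lem3:eq1} by $W$, use $\e[(W-W')W]=\tfrac12\e(W-W')^2\le 2/N^{(2k+1)/(k+1)}$, and control the tail term $\e|W|^{2k+3}/N^{1/(2k+2)}$ by splitting on $\{|m(\sigma)-m_0|\le\delta\}$ and invoking the large-deviation concentration of Lemma~\ref{prop3}. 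So your second suggested route is the one the paper takes; the bootstrap-through-Theorem~\ref{thm1} idea would be circular here, since Theorem~\ref{thm1} is downstream.

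One small wording fix: the line ``$\e[\,\cdot\,|W]=\e[\,\cdot\,|\sigma]$ on that event'' should be that $\e[M-M'|\sigma]$ is already $\sigma(m(\sigma))$-measurable (hence $\sigma(W)$-measurable), so the tower property gives $\e[M-M'|W]=\e[M-M'|\sigma]$ identically, not merely on an event.
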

	
		It should be pointed out that in the CW model \cite{CS11,EL09}, the exchangeable pairs were constructed by choosing a single site $i$ uniformly at random and updating $\sigma_i$ by $\sigma_i'$, whose law follows the conditional distribution of $\sigma_i$ given $(\sigma_j)_{j\neq i}.$ In those cases, the function $g$ can be expressed in terms of the second or fourth derivative of the infinite volume limit of the free energy, but this is not the case in the IMD model.

	\begin{proof}[Proof of Lemma \ref{lem3}]
		From the given assumptions, the Taylor formula yields
		\begin{align*}
		L_1(m(\sigma))&=\frac{L_1^{(2k+1)}(m_0)}{(2k+1)!}(m(\sigma)-m_0)^{2k+1}+\frac{\int_{m_0}^{m(\sigma)}L_1^{(2k+2)}(s)(m(\sigma)-s)^{2k+1}ds}{(2k+1)!}.
		\end{align*}
		Since
		\begin{align*}
		m(\sigma)-m_0&=\frac{W}{N^{1/(2k+2)}}\,\,\mbox{and}\,\,m(\sigma')-m_0=\frac{W'}{N^{1/(2k+2)}},
		\end{align*}
		we have that from \eqref{lem1:eq1},
		\begin{align*}
		\e[W-W'|W]&=\frac{L_1(m(\sigma))}{N^{(2k+1)/(2k+2)}}+\frac{R_1(m(\sigma))}{N^{(2k+1)/(2k+2)}}=g(W)+r(W),
		\end{align*}
		where $g$ is given by \eqref{eq7} and 	\begin{align*}
		r(W)&=\frac{\int_{m_0}^{m(\sigma)}L_1^{(2k+2)}(s)(m(\sigma)-s)^{2k+1}ds}{(2k+1)!N^{(2k+1)/(2k+2)}}+\frac{1}{N^{(2k+1)/(2k+2)}}R_1\Bigl(\frac{W}{N^{1/(2k+2)}}+m_0\Bigr).
		\end{align*}
		Here \eqref{eq3} follows by
		\begin{align*}
		|r(W)|&\leq K\Bigl(\frac{(m(\sigma)-m_0)^{2k+2}}{(2k+1)!N^{(2k+1)/(2k+2)}}+\frac{1}{N^{(2k+1)/(2k+2)}}\cdot\frac{1}{N}\Bigr)
		=\frac{K}{N^{(4k+3)/(2k+2)}}\Bigl(W^{2k+2}+1\Bigr).
		\end{align*}
		To show \eqref{lem3:eq2}, we use \eqref{lem2:eq1} and the fundamental theorem of calculus to obtain	
		\begin{align*}
		\e[(W-W')^2|W]&=\frac{L_2(m(\sigma))}{N^{(2k+1)/(k+1)}}+\frac{R_2(m(\sigma))}{N^{(2k+1)/(k+1)}}\\
	&=\frac{2}{c_0}+\int_{m_0}^{m(\sigma)}\frac{L_2'(s)}{N^{(2k+1)/(k+1)}}ds+\frac{R_2(m(\sigma))}{N^{(2k+1)/(k+1)}}.
		\end{align*}
		and therefore, 
		\begin{align*}
		\e \Bigl|1-\frac{c_0}{2}\e[(W-W')^2|W]\Bigr|&=\frac{c_0}{2}\e\Bigl|\int_{m_0}^{m(\sigma)}\frac{L_2'(s)}{N^{(2k+1)/(k+1)}}ds+\frac{R_2(m(\sigma))}{N^{(2k+1)/(k+1)}}\Bigr|\\
		&\leq K\Bigl(\e|m(\sigma)-m_0|+\frac{1}{N}\Bigr)\\
		&\leq K\Bigl(\frac{\e |W|}{N^{1/(2k+2)}}+\frac{1}{N}\Bigr).
		\end{align*}
	\end{proof}
	
	Next we need an auxiliary lemma. Denote by $O$ the collection of all maximizers to \eqref{fg}. Note that from the introduction $O$ contains at most two points. 
	For $m\in[0,1]$, let $d(m)$ be the distance from $m$ to the set $O.$
	
	\begin{lemma}\label{prop3}
        For any $\delta>0$, there exists $\eta>0$ such that
		$$
		\p(d(m(\sigma))\geq \delta)\leq Ke^{-N\eta}.
		$$
	\end{lemma}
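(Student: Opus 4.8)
\textbf{Proof proposal for Lemma \ref{prop3}.}

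The plan is to establish a large deviation upper bound for the magnetization $m(\sigma)$ under the weighted Gibbs measure \eqref{gibbs}, using the explicit free-energy variational formula \eqref{fg} together with the combinatorial identity \eqref{ce} for the dimer weights. First I would rewrite the probability that $m(\sigma)$ takes a given value $t/N$ in closed form. Using \eqref{eq}, $\p(m(\sigma)=t/N)$ equals the corresponding dimer probability, and by \eqref{hc} this is supported on those $t$ with $N-t$ even; for such $t$, summing \eqref{ce} with $L=N-t$ gives
\begin{align*}
\p\Bigl(m(\sigma)=\frac{t}{N}\Bigr)&=\frac{1}{Z_N}\binom{N}{t}\frac{(N-t)!}{((N-t)/2)!}2^{-(N-t)/2}\exp\bigl(N(a(t/N)^2+b(t/N))\bigr),
\end{align*}
where $Z_N$ is the partition function. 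Applying Stirling's formula to the factorials (together with $b=\tfrac12\log N+h-J$, whose $\log N$ term is designed to cancel the $N^{\pm}$ factors coming from $(N-t)!/((N-t)/2)!$), one obtains that, uniformly in $t$ with $m=t/N$ bounded away from $1$,
\begin{align*}
\frac1N\log\Bigl(\binom{N}{t}\tfrac{(N-t)!}{((N-t)/2)!}2^{-(N-t)/2}e^{N(am^2+bm)}\Bigr)&=\tilde p(m)+o(1),
\end{align*}
with the $o(1)$ uniform in $m$ over compact subsets of $[0,1)$ (the endpoint $m=1$, i.e. $t=N$, being a single term contributing $O(e^{-cN})$ after normalization). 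This is the step that I expect to be the main obstacle: one must check that the Stirling error terms are genuinely uniform in $t$, handle the boundary behavior near $m=1$ carefully (where $L/((L/2)!$ degenerates), and verify that the elementary function $\tfrac12 H(m)+\tfrac{m}{2}\log 2 - \tfrac{1-m}{2}\log 2 + am^2+bm-\tfrac12\log N$ — where $H$ is the binary entropy — indeed simplifies to $\tilde p(m)$ via the identity $g\circ\tau(m)=$ the optimal dimer fraction; this requires recalling the derivation of \eqref{fg} from \cite{ACM14}.

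Granting the uniform estimate, the rest is routine. Since $p_N\to\sup_m\tilde p(m)=:\tilde p^*$ and $\tilde p$ is continuous on $[0,1)$ with $\tilde p(m)\to-\infty$ as $m\to1$, for any $\delta>0$ set
\begin{align*}
\eta_0&=\tilde p^*-\sup\{\tilde p(m):m\in[0,1],\,d(m)\geq\delta\}>0,
\end{align*}
which is strictly positive because $O$ is exactly the (nonempty, compact) set of maximizers and $\{m:d(m)\geq\delta\}$ is a compact set disjoint from $O$ on which the continuous function $\tilde p$ therefore attains a strictly smaller maximum. Then
\begin{align*}
\p(d(m(\sigma))\geq\delta)&=\sum_{t:\,d(t/N)\geq\delta}\p\Bigl(m(\sigma)=\frac tN\Bigr)\leq (N+1)\exp\bigl(N(\sup_{d(m)\geq\delta}\tilde p(m)-p_N+o(1))\bigr)\\
&\leq (N+1)\exp\bigl(-N(\eta_0-o(1))\bigr)\leq Ke^{-N\eta}
\end{align*}
for any $\eta<\eta_0$ and $N$ large, after absorbing the polynomial factor $N+1$; adjusting $K$ covers the finitely many small $N$. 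This gives the claim, and in particular the law of large numbers $m(\sigma)\to m_0$ when $O=\{m_0\}$, recovering \cite[Theorem 1.5]{ACFM15}.
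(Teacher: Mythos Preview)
Your approach---compute the point probabilities explicitly via Stirling and read off a large-deviation rate function---is different from the paper's and would work if carried out correctly, but it has a real gap precisely at the step you flagged as the ``main obstacle.'' The Stirling expansion of
\[
\binom{N}{t}\frac{(N-t)!}{((N-t)/2)!}\,2^{-(N-t)/2}\,e^{N(am^2+bm)}
\]
does \emph{not} produce $\tilde p(m)$. Your ``elementary function'' is already miscomputed: since $\binom{N}{t}(N-t)!=N!/t!$, the entropy contribution is $-m\log m-\tfrac{1-m}{2}\log(1-m)$, not $\tfrac12 H(m)$, and after the $\tfrac12\log N$ in $b$ cancels the combinatorial $\tfrac{N}{2}\log N$ one finds the rate function
\[
\phi(m)\;=\;-\tfrac{1-m}{2}-m\log m-\tfrac{1-m}{2}\log(1-m)+Jm^2+(h-J)m,
\]
which is \emph{not} $\tilde p(m)$ (the sign on $Jm^2$ alone is opposite). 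One can check that $\phi\le\tilde p$ with equality exactly on the fixed-point set $m=g\circ\tau(m)$, so $\phi$ and $\tilde p$ do share the same maximizer set $O$; but you would have to prove this, and that is where the identity ``$g\circ\tau(m)=$ the optimal dimer fraction'' you allude to actually enters---it is an optimization over an auxiliary variable, not an algebraic simplification of the Stirling output. Without that step, your definition of $\eta_0$ (which uses $\tilde p$) is not justified by your computation (which yields $\phi$).

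The paper sidesteps Stirling entirely. It linearizes the quadratic via the identity $\delta_{m(D),m}\,e^{Nam(D)^2}=\delta_{m(D),m}\,e^{N(2am\cdot m(D)-am^2)}$, bounds the constrained sum $\sum_{D:m(D)=m}$ by the \emph{full} partition function of the IMD model with parameters $(J',h')=(0,\tau(m))$, and then invokes the free-energy limit \eqref{fg} for that simpler model. This directly delivers the upper bound $\sup_{U}\tilde p-\sup_{[0,1]}\tilde p$. In effect the decoupling bound \emph{is} the inequality $\phi\le\tilde p$, and the supremum over the auxiliary model is what makes $g\circ\tau$ appear inside $\tilde p$. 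Your direct route can be salvaged, but only after you identify $\phi$ correctly and either prove $\phi\le\tilde p$ with equality on $O$, or argue independently (e.g.\ via $\phi'(m)=0\Leftrightarrow m^2=(1-m)e^{2\tau(m)}$, which is \eqref{eq9}) that the maximizers of $\phi$ coincide with $O$.
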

	
	\begin{proof} Let $U=\{m\in[0,1]:d(m)\geq \delta\}.$ By the virtue of \eqref{eq}, it suffices to prove that for any $\delta>0$, there exists $\eta>0$ such that
		$$
		\p(m(D)\in U)\leq Ke^{-N\eta}.
		$$
		Note that
		\begin{align*}
		\frac{1}{N}\log\p(m(D)\in U)&\leq \frac{1}{N}\log\sum_{D:m(D)\in U}\exp(-H(D))-\frac{1}{N}\log\sum_{D}\exp(-H(D)).
		\end{align*}
		Set $A=\{0,1/N,\ldots,(N-1)/N,1\}.$ Observe that
		\begin{align*}
		\delta_{m(D),m}\exp(-H(D))&=\delta_{m(D),m}\exp N(am(D)^2+bm(D))\\
		&=\delta_{m(D),m}\exp N(a(2m(D)m-m^2)+bm(D))\\
		&=\delta_{m(D),m}\exp N((2am+b)m(D)-am^2).
		\end{align*}
		We obtain
		\begin{align*}
		\sum_{D:m(D)\in U}\exp(-H(D))&=\sum_{D}1(m(D)\in U)\exp(-H(D))\\
		&=\sum_{m\in A  \cap U}\sum_{D}\delta_{m(D),m}\exp(-H(D))\\
		&=\sum_{m\in A  \cap U}\sum_{D}\exp N((2am+b)m(D)-am^2)\\
		&\leq (N+1)\sup_{m\in A\cap U}e^{-aNm^2}\sum_{D}\exp N(2am+b)m(D)
		\end{align*}
		and thus,
		\begin{align*}
		&\frac{1}{N}\log \p((m(\sigma)\in U)\\
		&\leq \frac{\log (N+1)}{N}+\sup_{m\in U}\Bigl\{-am^2+\frac{1}{N}\log\sum_{D}\exp \bigl(N(2am+b)m(D)\bigr)\Bigr\}-\frac{1}{N}\log \sum_D\exp(-H(D)).
		\end{align*}
		Here,
		$$
		\frac{1}{N}\log\sum_{D}\exp \bigl(N(2am+b)m(D)\bigr)
		$$
		is indeed the free energy of an IMD model with parameter $(J',h')=(0,(2m-1)J+h)$ and its thermodynamic limit, according to the formula \eqref{fg}, is equal to
		$$
		-\Bigl(\frac{1-g\circ\tau(m)}{2}+\log(1-g\circ\tau(m))\Bigr).
		$$
		As a consequence, $$
		\limsup_{N\rightarrow\infty}\frac{1}{N}\log \p(m(D)\in U)\leq \sup_{m\in U}\tilde{p}(m)-\sup_{m\in[0,1]}\tilde{p}(m)=:-2\eta.
		$$
		Since $\tilde{p}(m)$ is continuous on $[0,1]$ and any point in $O$ is away from the maximizers with distance at least $\delta,$ we conclude that $\eta>0$ and that there exists some $N_0$ such that for all $N\geq N_0,$
		\begin{align*}
		\frac{1}{N}\log \p\bigl(m(D)\in U\bigr)&\leq -\eta
		\end{align*}
		and consequently, 
		$\p(m(D)\in U)\leq Ke^{-N\eta}.$
	\end{proof}
	
	\begin{lemma}\label{lem5}
	   Under the assumptions of Proposition \ref{lem1}, there exists some $K>0$ such that $\e W^{2k+2}\leq K$ for all $N\geq 1.$
	\end{lemma}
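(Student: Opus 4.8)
The plan is to extract a closed inequality for $\e W^{2k+2}$ from the exchangeable pair identity for the second moment of $W$. Under the hypotheses in force (those of Lemma \ref{lem3}, with $m_0$ the unique maximizer of \eqref{fg}), Lemma \ref{lem3} gives $\e[W-W'|W]=c_NW^{2k+1}+r(W)$ with $c_N:=L_1^{(2k+1)}(m_0)/((2k+1)!\,N^{(2k+1)/(k+1)})>0$ and $|r(W)|\le KN^{-(4k+3)/(2k+2)}(W^{2k+2}+1)$, while Proposition \ref{lem1} gives $\e[(W-W')^2|W]=N^{-(2k+1)/(k+1)}(L_2(m(\sigma))+R_2(m(\sigma)))$. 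Since the state space is finite, $\e W^2<\infty$ and $\e(W')^2=\e W^2$; expanding $(W')^2-W^2=-2W(W-W')+(W-W')^2$, taking conditional expectation given $W$, and then taking expectations, I would reach
\[
2c_N\,\e W^{2k+2}=\frac{\e L_2(m(\sigma))+\e R_2(m(\sigma))}{N^{(2k+1)/(k+1)}}-2\,\e[W\,r(W)],
\]
where the first term on the right is at most $KN^{-(2k+1)/(k+1)}$ since $L_2$ is bounded on $[0,1]$ and $|R_2|\le K/N$.

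The only delicate term is the self-referential $\e[W\,r(W)]$: as $|r(W)|$ carries a factor $W^{2k+2}$, the quantity $|W\,r(W)|$ involves $|W|^{2k+3}$, a priori larger than $W^{2k+2}$. I would split this expectation according to whether $|m(\sigma)-m_0|\le c_0$ or not, for a small constant $c_0>0$ fixed later, using the deterministic bounds $|W|\le N^{1/(2k+2)}$ and $W^{2k+2}=N(m(\sigma)-m_0)^{2k+2}\le N$. On the far event these give $|W\,r(W)|\le K$, and since $m_0$ is the unique maximizer (so $O=\{m_0\}$) Lemma \ref{prop3} bounds the probability of this event by $Ke^{-N\eta}$ for some $\eta>0$; this makes the far contribution exponentially small, which is essential because it ultimately enters the identity multiplied by $N^{(2k+1)/(k+1)}$. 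On the near event $|W|\le c_0N^{1/(2k+2)}$, hence $\e[|W|^{2k+3};\,|m(\sigma)-m_0|\le c_0]\le c_0N^{1/(2k+2)}\,\e W^{2k+2}$, so the near part of $\e[W\,r(W)]$ is at most $K_1c_0N^{-(2k+1)/(k+1)}\,\e W^{2k+2}$, where $K_1$ depends only on $\sup_{[0,1]}|L_1^{(2k+2)}|$ (and \emph{not} on $c_0$), together with a harmless $KN^{-(4k+3)/(2k+2)}\,\e|W|$.

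Feeding these estimates into the identity, multiplying by $N^{(2k+1)/(k+1)}$ (so that $c^*:=c_NN^{(2k+1)/(k+1)}=L_1^{(2k+1)}(m_0)/(2k+1)!>0$), and using $\e|W|\le1+\e W^{2k+2}$, I would obtain an inequality of the shape
\[
2c^*\,\e W^{2k+2}\le K+2K_1c_0\,\e W^{2k+2}+\frac{2K}{N^{1/(2k+2)}}(1+\e W^{2k+2})+KN^{(2k+1)/(k+1)}e^{-N\eta}.
\]
Here the crucial structural feature is that $K_1$ is independent of $c_0$, so one may first fix $c_0$ with $2K_1c_0\le c^*/2$, and then choose $N_0$ large enough that for $N\ge N_0$ all remaining $\e W^{2k+2}$-terms on the right have combined coefficient at most $c^*/4$ while $KN^{(2k+1)/(k+1)}e^{-N\eta}\le1$. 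This leaves $\tfrac{5c^*}{4}\,\e W^{2k+2}\le K$, a bound uniform over $N\ge N_0$; for the finitely many $N<N_0$, $\e W^{2k+2}$ is a finite number since $W$ takes finitely many values, so enlarging $K$ completes the argument. I expect the main obstacle to be exactly this absorption step: it closes only because the dangerous contribution carries the tunable prefactor $c_0$, and because Lemma \ref{prop3} downgrades the otherwise fatal far-event term to an exponentially small error.
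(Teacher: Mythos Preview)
Your proposal is correct and follows essentially the same route as the paper: both start from $\e[W(W-W')]=\tfrac12\e(W-W')^2$ (equivalently, $\e W^2=\e(W')^2$), substitute the decomposition $\e[W-W'|W]=c_NW^{2k+1}+r(W)$ from Lemma~\ref{lem3}, and close the resulting inequality for $\e W^{2k+2}$ by splitting $\e[|W|^{2k+3}]$ on a near/far event and invoking Lemma~\ref{prop3} on the far part. The only cosmetic differences are that the paper bounds $\e(W-W')^2$ via the deterministic estimate $|W-W'|\le 2N^{-(2k+1)/(2k+2)}$ rather than through Proposition~\ref{lem1}, and controls $N^{-1/(2k+2)}\e|W|$ directly by $|W|\le N^{1/(2k+2)}$ instead of absorbing it through $\e|W|\le 1+\e W^{2k+2}$.
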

	
	\begin{proof}
		From \eqref{lem3:eq1}, we have that
		\begin{align*}
		W^{2k+1}&=\frac{(2k+1)!N^{(2k+1)/(k+1)}}{L_1^{(2k+1)}(m_0)}\Bigl(\e[W-W'|W]-r(W)\Bigr).
		\end{align*}
		Multiplying $W$ on both sides and then taking expectation give
		\begin{align}
		\begin{split}
		\notag
		\e W^{2k+2}&=\frac{(2k+1)!N^{(2k+1)/(k+1)}}{L_1^{(2k+1)}(m_0)}\Bigl(\e[(W-W')W]-\e Wr(W)\Bigr)
		\end{split}\\
		\begin{split}
		\label{eq4}
		&\leq \frac{(2k+1)!}{L_1^{(2k+1)}(m_0)}\Bigl(N^{(2k+1)/(k+1)}\e[(W-W')W]+\frac{K\e |W|^{2k+3}}{N^{1/(2k+2)}}+\frac{K\e|W|}{N^{1/(2k+2)}}\Bigr),
		\end{split}
		\end{align}
		where we have used \eqref{eq3} to bound $r(W).$
		Here from the exchangeablility of $W$ and $W'$, we can express $\e[(W-W')W]=2^{-1}\e(W-W')^2$, which combined with the trivial bound $|W-W'|\leq 2/N^{(2k+1)/(2k+2)}$ allows to control the first term of \eqref{eq4},
		$$
		N^{(2k+1)/(k+1)}\e[(W-W')W]\leq 2.
		$$ 
		As for the third term, we use the bond $|W|\leq N^{1/(2k+2)}$ to obtain $N^{-1/(2k+2)}\e|W|\leq 1.$ To bound the second term, for any $\delta>0$, Lemma \ref{prop3} says that there exists some $\eta>0$ and $K>0$ such that
		\begin{align*}
		\p(|W|\geq \delta N^{1/(2k+2)})=\p(|m(\sigma)-m_0|\geq \delta)&\leq Ke^{-N\eta}
		\end{align*}
		for all $N\geq 1.$ Consequently, using again the trivial bound $|W|\leq N^{1/(2k+2)},$
		\begin{align}
		\begin{split}\label{lem3:proof:eq1}
		\frac{\e |W|^{2k+3}}{N^{1/(2k+2)}}&=\frac{\e [|W|^{2k+3};|W|\leq \delta N^{1/(2k+2)}]}{N^{1/(2k+2)}}+\frac{\e [|W|^{2k+3};|W|\geq \delta N^{1/(2k+2)}]}{N^{1/(2k+2)}}\\
		&\leq \delta \e|W|^{2k+2}+N\p(|m(\sigma)-m_0|\geq \delta)\\
		&\leq \delta \e|W|^{2k+2}+KNe^{-\eta N}.
		\end{split}
		\end{align}
		Plugging these three bounds into \eqref{eq4} gives
		\begin{align*}
		\Bigl(1-\frac{K(2k+1)!}{L_1^{(2k+1)}(m_0)}\delta\Bigr)\e|W|^{2k+2}&\leq \frac{(2k+1)!}{L_1^{(2k+1)}(m_0)}\Bigl(2+KNe^{-\eta N}+K\Bigr),
		\end{align*}
		which completes our proof.
		
	\end{proof}

	\begin{lemma}\label{lem4}
	   Suppose that the conditions of Proposition \ref{lem1} hold.
		Let $Z$ be a continuous random variable on $\mathbb{R}$ with density
		\begin{align*}
		p(z)&=c_1\exp\Bigl(-dz^{2k+2}\Bigr),
		\end{align*}
		where $$
		d:=\frac{2L_1^{(2k+1)}(m_0)}{(2k+2)!L_2(m_0)}.
		$$
		and $c_1$ is a normalizing constant. Then there exists some constant $K$ independent of $N$ such that
		\begin{align*}
		\sup_{z}\Bigl|\p(W\leq z)-\p(Z\leq z)\Bigr|\leq \frac{K}{N^{1/(2k+2)}}.
		\end{align*}
	\end{lemma}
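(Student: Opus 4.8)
The plan is to apply Theorem~\ref{thm1} to the exchangeable pair $(W,W')$, using Lemma~\ref{lem3} to produce the decomposition \eqref{thm1:ass} and Lemma~\ref{lem5} for the moment control. First I would record the three ingredients supplied by Lemma~\ref{lem3}: the function $g$ of \eqref{eq7}, the remainder $r$ with the bound \eqref{eq3}, and the constant $c_0$ of \eqref{c_0}. A direct computation gives
\begin{align*}
c_0\int_0^t g(s)\,ds&=\frac{2N^{(2k+1)/(k+1)}}{L_2(m_0)}\cdot\frac{L_1^{(2k+1)}(m_0)}{(2k+2)!\,N^{(2k+1)/(k+1)}}\,t^{2k+2}=d\,t^{2k+2},
\end{align*}
so the density of $Z$ is precisely the Stein density $p(t)=c_1e^{-c_0\int_0^t g}$ appearing in Theorem~\ref{thm1}, with $c_1$ the common normalizing constant. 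Since $L_1^{(2k+1)}(m_0)>0$ and $2k+1$ is odd, $g$ is nondecreasing with $g(t)\ge 0$ for $t>0$ and $g(t)\le 0$ for $t\le 0$, as required.

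The key structural observation — the one that makes the hypotheses of Theorem~\ref{thm1} manageable — is that the powers of $N$ cancel in the products $c_0 g(x)$ and $c_0 g'(x)$, each of which becomes a fixed monomial in $x$ with coefficients built from $L_1^{(2k+1)}(m_0)$ and $L_2(m_0)$, and that $c_1$ is likewise independent of $N$ (it depends only on $d$). Hence the left-hand side of \eqref{thm1:ass1} is a fixed function of $x$, and a short asymptotic check — for $|x|\to\infty$ it behaves like $|x|^{2k}\cdot|x|\cdot|x|^{-(2k+1)}=O(1)$, and it is manifestly bounded near $0$ — produces a constant $c_2<\infty$ independent of $N$.

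Next I would set $\delta=2N^{-(2k+1)/(2k+2)}$: since only the pair $(\sigma_u,\sigma_v)$ is resampled, $|M-M'|=|\sigma_u+\sigma_v-\sigma_u'-\sigma_v'|\le 2$, hence $|\Delta|=|W-W'|\le\delta$. Substituting into \eqref{thm1:ineq}, I would estimate the four terms in turn. The first term is handled by \eqref{lem3:eq2} together with $\e|W|\le(\e W^{2k+2})^{1/(2k+2)}\le K$ from Lemma~\ref{lem5}, giving $O(N^{-1/(2k+2)})$. For the second term, \eqref{eq3} and Lemma~\ref{lem5} yield $\e|r(W)|\le KN^{-(4k+3)/(2k+2)}$, and since $c_0\le KN^{(4k+2)/(2k+2)}$ and $c_1$ is a fixed constant, $\tfrac{2c_0}{c_1}\e|r(W)|\le KN^{-1/(2k+2)}$. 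The third term is $c_1\max(1,c_2)\delta\le K\delta\le KN^{-1/(2k+2)}$. For the fourth term, $\delta^3c_0\le KN^{-(2k+1)/(2k+2)}$ while the bracket is bounded because $\e|c_0 g(W)|=K\,\e|W|^{2k+1}\le K$ again by Lemma~\ref{lem5}; so this term is $O(N^{-(2k+1)/(2k+2)})$ as well. Collecting the four estimates and using $(2k+1)/(2k+2)\ge 1/(2k+2)$ gives the claimed bound $KN^{-1/(2k+2)}$.

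The main obstacle is the bookkeeping in this last step: because the pair-update construction forces $c_0$ to be large, of order $N^{2(2k+1)/(2k+2)}$, one needs both that the remainder $r$ from Lemma~\ref{lem3} is correspondingly small (of order $N^{-(4k+3)/(2k+2)}$) and, crucially, that the moments $\e W^{2k+2}$ are bounded uniformly in $N$, which is exactly Lemma~\ref{lem5} (itself resting on the concentration estimate Lemma~\ref{prop3}). Without this uniform moment control the $c_0$-weighted terms in \eqref{thm1:ineq} would not decay at the right rate. By contrast, verifying \eqref{thm1:ass1} is essentially immediate once the cancellation of $N$-powers is noticed.
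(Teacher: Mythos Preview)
Your proof is correct and follows essentially the same route as the paper's own argument: identify $p(t)=c_1e^{-c_0\int_0^t g}$ with the density $c_1e^{-dt^{2k+2}}$, verify \eqref{thm1:ass1} by observing that $c_0g(x)$ and $c_0g'(x)$ are $N$-independent monomials so the quantity in question is bounded at infinity and near the origin, and then apply \eqref{thm1:ineq} with $\delta=2N^{-(2k+1)/(2k+2)}$, bounding each term via Lemma~\ref{lem3} and the moment control of Lemma~\ref{lem5}. Your exposition is more explicit than the paper's about the cancellation of the powers of $N$ and about which lemma handles which term, but the logical skeleton is identical.
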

	
	\begin{proof}
		Recall $c_0$ from \eqref{c_0} and $g,r$ from \eqref{lem3:eq1}. We define 
		\begin{align*}
		p(t)&=c_1e^{-c_0\int_0^tg(s)ds}=c_1e^{-dt^{2k+2}},
		\end{align*}
		where $c_1$ is a normalizing constant such that $p$ is a probability density on $\mathbb{R}.$ Using these $c_0,c_1,g,r$, we now verify \eqref{thm1:ass1} for some $c_2$, which can be easily seen since
		\begin{align*}
		&c_0|g'(x)|\Bigl(|x|+\frac{3}{c_1}\Bigr)\min\Bigl(\frac{1}{c_1},\frac{1}{|c_0g(x)|}\Bigr)\\
		&=(2k+2)d|x|^{2k+1}\Bigl(|x|+\frac{3}{c_1}\Bigr)\min\Bigl(\frac{1}{c_1},\frac{1}{dx^{2k+2}}\Bigr)
		\end{align*}
		has a limit at infinity and is clearly bounded for arbitrary small $x.$ As a result, the inequality \eqref{thm1:ineq} with $\delta=2/N^{(2k+1)/(2k+2)}$ gives
		\begin{align*}
		&\sup_z|\p(W\leq z)-\p(Z\leq z)|\\
		&\leq 3K\Bigl(\frac{1}{N^{1/(2k+2)}}+\frac{1}{N}\Bigr)+\frac{4K}{N^{1/(2k+2)}}(\e |W|^{2k+2}+1)\\
		&+\frac{2c_1\max(1,c_2)}{N^{(2k+1)/(2k+2)}}+\frac{16}{N^{(2k+1)/(2k+2)}L_2(m_0)}\Bigl\{\Bigl(2+\frac{c_2d}{2}\e |W|^{2k+1}\Bigr)+\frac{c_1c_2}{2}\Bigr\}\\
		&\leq \frac{K}{N^{1/(2k+2)}},
		\end{align*}
		where the first inequality used Lemma \ref{lem3} and the second one used Lemma \ref{lem5}.
		
	\end{proof}

	\begin{proof}[Proof of Theorem \ref{mainthm}] Recall $\lambda,\lambda_c$ from Theorem \ref{mainthm}. Suppose that $(J,h)\notin\Gamma\cup\{(J_c,h_c)\}$ and $m_0$ is the unique maximizer of \eqref{fg}. From \eqref{g}, $m_0$ satisfies
		\begin{align}\label{eq8}
		2m_0+e^{2\tau(m_0)}=\sqrt{e^{4\tau(m_0)}+4e^{2\tau(m_0)}}
		\end{align}
		or equivalently
		\begin{align}\label{eq9}
		m_0^2=(1-m_0)e^{2\tau(m_0)}.
		\end{align}
		Note that from \eqref{eq8},
		\begin{align*}
		\tilde{p}''(m_0)
		&=2J(2Jg'\circ\tau(m_0)-1)\\
		&=2J\Bigl(2J\Bigl(\frac{e^{4\tau(m_0)}+2e^{2\tau(m_0)}}{\sqrt{e^{4\tau(m_0)}+4e^{2\tau(m_0)}}}-e^{2\tau(m_0)}\Bigr)-1\Bigr)\\
		&=-2J\frac{2m_0+(4J(m_0-1)+1)e^{2\tau(m_0)}}{2m_0+e^{2\tau(m_0)}}
		\end{align*}  
		and thus,
		\begin{align*}
		\lambda&=\Bigl(-\frac{1}{\tilde{p}''(m_0)}-\frac{1}{2J}\Bigr)=\frac{2(1-m_0)e^{2\tau(m_0)}}{2m_0+(4J(m_0-1)+1)e^{2\tau(m_0)}}.
		\end{align*}
		On the other hand, \eqref{eq9} implies that $L_1(m_0)=0$ and that
		\begin{align*}
		\frac{2L_1'(m_0)}{2!L_2(m_0)}&=\frac{1}{2}\frac{2m_0+(4J(m_0-1)+1)e^{2\tau(m_0)}}{m_0^2+(1-m_0)e^{2\tau(m_0)}}=\frac{2m_0+(4J(m_0-1)+1)e^{2\tau(m_0)}}{4(1-m_0)e^{2\tau(m_0)}}=\frac{1}{2\lambda}.
		\end{align*}
		Since the equation \eqref{eq8} also implies
		\begin{align*}
		\tilde{p}''(m_0)+2J&=2Jg'\circ\tau(m_0)=\frac{4J(1-m_0)e^{2\tau(m_0)}}{2m_0+e^{2\tau(m_0)}}>0,
		\end{align*}
		we conclude that $\lambda>0$ and thus, $L_1'(m_0)>0.$ Lemma \ref{lem4} and \eqref{eq} then deduce \eqref{mainthm:eq1}. 		
		Next assume that $(J,h)=(J_c,h_c).$ In this case $m_0=m_c=2-\sqrt{2}$ and a direct computation gives
		\begin{align*}
		L_1(m_c)=0,\,\,L_1'(m_c)=0,\,\,L_1''(m_c)=0,\,\,L_1'''(m_c)=6+\frac{17\sqrt{2}}{4}
		\end{align*}
		and 
		\begin{align*}
		\frac{2L_1'''(m_c)}{4!L_2(m_c)}&=\frac{1}{2}+\frac{17\sqrt{2}}{48}=-\frac{\tilde{p}^{(4)}(m_0)}{24}=\frac{\lambda_c}{24}.
		\end{align*}  
		Lemma \ref{lem4} and \eqref{eq} then yield \eqref{mainthm:eq2}. This finishes our proof.
	\end{proof}

		\section{Moment computations along the critical line $\Gamma$}
		
		This section is devoted to dealing with some moment computations for the parameters along the critical line. Set $M(\rho)=N^{-1}\sum_{i=1}^N\rho_i$ for $\rho=(\rho_1,\ldots,\rho_N)\in\Sigma.$ Let
		\begin{align*}
		S_1&=\Bigl\{\rho\in\Sigma:M(\rho)<\xi N\Bigr\},\\
		S_2&=\Bigl\{\rho\in \Sigma:M(\rho)>\xi N\Bigr\}.
		\end{align*}
		For $\ell\geq 1,$ define the probability measure $\p_\ell=\p(\,\cdot\,|S_\ell)$ on $S_\ell$. For the same reason as \eqref{eq}, one sees that 
		\begin{align}\label{eq-2}
		\p_\ell\Bigl(m(\sigma)=\frac{t}{N}\Bigr)=\frac{\p\Bigl(m(\sigma)=\frac{t}{N}\Bigr)}{\p(S_\ell)}=\frac{\p\Bigl(m(D)=\frac{t}{N}\Bigr)}{\p(\mathscr{D}_\ell)}
		=\p\Bigl(m(D)=\frac{t}{N}\Big|\mathscr{D}_\ell\Bigr),
			\end{align}
			where $\mathscr{D}_\ell$ is defined in \eqref{setd}.
	    Therefore, to prove the conditional central limit theorem for the monomer density in Theorem \ref{thm4}, it suffices to establish the central limit theorem for the magnetization under $\p_\ell$. Following the same construction as before, let $\sigma$ be sampled from $ {\p}_\ell$ and $uv$ be a uniform random variable from $E.$ Under the probability measure $\p_\ell$, let $(\sigma_u',\sigma_v')$ be the conditional distribution of $(\sigma_u,\sigma_v)$ given $(\sigma_i)_{i\neq u,v}$ and independent of $(\sigma_v,\sigma_v),$ that is,
		\begin{align*}
		{\p}_\ell(\sigma_u'=s,\sigma_v'=t|\sigma)&=\frac{ {\p}_\ell(\sigma_{uv}^{st})}{ {\p}_\ell(\sigma_{uv}^{11})+ {\p}_\ell(\sigma_{uv}^{10})+ {\p}_\ell(\sigma_{uv}^{01})+ {\p}_\ell(\sigma_{uv}^{00})}.
		\end{align*}
		This pair $(\sigma,\sigma')$ is therefore exchangeable from Proposition \ref{prop0}.
		 Set
		\begin{align*}
		S_1'&=\Bigl\{\sigma:M(\sigma)<\xi N-2\Bigr\},\\
		S_1''&=\Bigl\{\sigma:\xi N-2\leq M(\sigma)<\xi N\Bigr\}
		\end{align*}
		and
		\begin{align*}
		S_2'&=\Bigl\{\sigma:M(\sigma)>\xi N+2\Bigr\},\\
		S_2''&=\Bigl\{\sigma:\xi N+2\geq M(\sigma)>\xi N\Bigr\}.
		\end{align*}
		In the case $\sigma\in S_\ell',$ we have
		\begin{align}\label{ae:eq1}
		{\p}_\ell(\sigma_u'=s,\sigma_v'=t|\sigma)&=\frac{ {\p}(\sigma_{uv}^{st})/\p(S_\ell)}{ \bigl({\p}(\sigma_{uv}^{11})+ {\p}(\sigma_{uv}^{10})+ {\p}(\sigma_{uv}^{01})+ {\p}(\sigma_{uv}^{00})\bigr)/\p(S_\ell)}={\p}(\sigma_u'=s,\sigma_v'=t|\sigma),
		\end{align}
		where we used $\sigma_{uv}^{11},\sigma_{uv}^{10},\sigma_{uv}^{01},\sigma_{uv}^{00}\in S_\ell.$ However, if $\sigma\in S_\ell''$, this equation is no longer valid.

		\begin{lemma}\label{lem-1}
			Recall $L_k$ and $R_k$ from Proposition \ref{lem1}.
			We have that 
			\begin{align}\label{lem-1:eq1}
			\e_\ell[(M-M')^k|M]&=L_k(m(\sigma))+R_k(m(\sigma))+T_{\ell,k}(m(\sigma)),
			\end{align}
			where $ {\e}_\ell|T_{\ell,k}(m(\sigma))|\leq  K{\p}_\ell( \sigma \in S_\ell'').$
		\end{lemma}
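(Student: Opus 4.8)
The plan is to compare the conditional moment computation under $\p_\ell$ with the unconditional one from Proposition \ref{lem1}, isolating the discrepancy into the term $T_{\ell,k}$. First I would split the computation of $\e_\ell[(M-M')^k|M]$ according to whether the sampled configuration $\sigma$ lies in $S_\ell'$ or $S_\ell''$. On the event $\sigma\in S_\ell'$, equation \eqref{ae:eq1} shows that the conditional law of $(\sigma_u',\sigma_v')$ under $\p_\ell$ coincides exactly with that under $\p$, because all four updated configurations $\sigma_{uv}^{11},\sigma_{uv}^{10},\sigma_{uv}^{01},\sigma_{uv}^{00}$ still lie in $S_\ell$; hence the identical computation carried out in the proof of Proposition \ref{lem1} applies verbatim and produces $L_k(m(\sigma))+R_k(m(\sigma))$. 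So I would write
\begin{align*}
\e_\ell[(M-M')^k|M]&=\e_\ell\bigl[(M-M')^k\indi(\sigma\in S_\ell')\,\big|\,M\bigr]+\e_\ell\bigl[(M-M')^k\indi(\sigma\in S_\ell'')\,\big|\,M\bigr]\\
&=\bigl(L_k(m(\sigma))+R_k(m(\sigma))\bigr)\indi(\sigma\in S_\ell')+\e_\ell\bigl[(M-M')^k\indi(\sigma\in S_\ell'')\,\big|\,M\bigr].
\end{align*}
Then I would \emph{define}
$$
T_{\ell,k}(m(\sigma))=\e_\ell\bigl[(M-M')^k\indi(\sigma\in S_\ell'')\,\big|\,M\bigr]-\bigl(L_k(m(\sigma))+R_k(m(\sigma))\bigr)\indi(\sigma\in S_\ell''),
$$
so that \eqref{lem-1:eq1} holds by construction: adding and subtracting the $S_\ell''$ contribution of $L_k+R_k$ reconstitutes $L_k(m(\sigma))+R_k(m(\sigma))$ on all of $\Sigma$ plus the correction $T_{\ell,k}$ supported on $S_\ell''$.

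It remains to bound $\e_\ell|T_{\ell,k}(m(\sigma))|$. Here I would use the crude deterministic bound $|M-M'|\le 2$, which follows because the exchangeable pair updates only the pair of coordinates $(\sigma_u,\sigma_v)$, together with the fact (established after \eqref{C}) that the parity/monomer structure forces $\sigma_u+\sigma_v-\sigma_u'-\sigma_v'\in\{-2,0,2\}$; thus $|(M-M')^k|\le 2^k\le K$. Likewise $|L_k(m)|,|R_k(m)|\le K$ uniformly in $m\in[0,1]$ by Proposition \ref{lem1} and the explicit form of $L_k$. Both pieces defining $T_{\ell,k}$ are supported on $\{\sigma\in S_\ell''\}$ (the first as a conditional expectation of something supported there, the second explicitly), so
$$
\e_\ell|T_{\ell,k}(m(\sigma))|\le K\,\p_\ell(\sigma\in S_\ell''),
$$
which is the claimed estimate, after absorbing the factor $2^k$ and the bound on $L_k+R_k$ into $K$.

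I do not anticipate a serious obstacle in this lemma itself: the content is essentially bookkeeping, since the only place where conditioning on $S_\ell$ distorts the exchangeable-pair dynamics is the thin boundary slab $S_\ell''$ of width $2$, and there the moments are bounded by a constant. The one point requiring a little care is verifying that on $S_\ell'$ all four configurations $\sigma_{uv}^{st}$ indeed remain in $S_\ell$ for every choice of edge $uv$ — this is exactly why the slab has width $2$ rather than $1$, and it uses that a single pair-update changes $M$ by at most $2$. The genuinely hard work is deferred: showing that $\p_\ell(\sigma\in S_\ell'')$ is itself small (exponentially or at least polynomially in $N$), which will be needed later to make $T_{\ell,k}$ negligible in the Berry-Esseen bound, and which will presumably rely on a concentration estimate for $m(\sigma)$ near $m_\ell$ analogous to Lemma \ref{prop3} restricted to the half-space $S_\ell$.
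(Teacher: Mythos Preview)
Your proposal is correct and follows essentially the same approach as the paper: split according to $S_\ell'$ versus $S_\ell''$, use \eqref{ae:eq1} on $S_\ell'$ to reduce to Proposition~\ref{lem1}, and absorb the boundary discrepancy into $T_{\ell,k}$ bounded via $|M-M'|\le 2$ and the uniform bounds on $L_k,R_k$. The only cosmetic difference is that the paper first computes $\e_\ell[(M-M')^k\mid\sigma]$ and then takes $\e_\ell[\cdot\mid M]$, whereas you condition on $M$ directly; since $\indi(\sigma\in S_\ell')$, $\indi(\sigma\in S_\ell'')$, $L_k(m(\sigma))$ and $R_k(m(\sigma))$ are all $M$-measurable, the two routes coincide and your definition of $T_{\ell,k}$ agrees with the paper's.
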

		
		\begin{proof}
			Observe that if $\sigma\in S_\ell',$ from \eqref{ae:eq1} and then \eqref{lem1:eq1}, \eqref{lem2:eq1},
			\begin{align*}
			{\e}_\ell[(M-M')^k|\sigma]&=\frac{1}{|E|}\sum_{uv\in E} {\e}_\ell[(\sigma_u+\sigma_v-\sigma_u'-\sigma_v')^k|\sigma]\\
			&=\frac{1}{|E|}\sum_{uv\in E}\e[(\sigma_u+\sigma_v-\sigma_u'-\sigma_v')^k|\sigma]\\
			&=\e[(M-M')^k|\sigma]\\
			&=L_k(m(\sigma))+R_k(m(\sigma)).
			\end{align*}
			Thus,
			\begin{align*}
			\e_\ell[(M-M')^k|\sigma]&=1(\sigma\in S_\ell')\bigl(L_k(m(\sigma))+R_k(m(\sigma))\bigr)+ 1(\sigma\in S_\ell''){\e}_\ell[(M-M')^k|\sigma]\\
			&=L_k(m(\sigma))+R_k(m(\sigma))+T(m(\sigma)),
			\end{align*}
			where 
			\begin{align*}
			T(m(\sigma))&=1(\sigma\in S_\ell'')\bigl( {\e}_\ell[(M-M')^k|\sigma]-L_k(m(\sigma))-R_k(m(\sigma))\bigr).
			\end{align*}
			Taking conditional expectation $ {\e}_\ell[\,\cdot\,|M]$ and letting $T_{\ell,k}(m(\sigma))=\e_\ell[T(m(\sigma))|M]$ give \eqref{lem-1:eq1}. Here $\e_\ell|T_{\ell,k}(m(\sigma))|\leq K\p_\ell(\sigma\in S_\ell'')$ holds true by applying the trivial bound $|M-M'|\leq 2$ and the fact that $L_k,R_k$ are bounded.
		\end{proof}

		\section{Proof of Theorem \ref{thm4}}
		
			In this section, we suppose that $(J,h)\in \Gamma$. This assumption implies that $\tilde{p}'(m_\ell)=0$ and $\tilde{p}''(m_\ell)<0$ and therefore using the first half of the derivation of the proof of Theorem \ref{mainthm}, they can be transferred as $L_1(m_\ell)=0$ and $L_1'(m_\ell)>0$. Furthermore, the quantity $\lambda_\ell$ defined in Theorem \ref{thm4} is positive. Denote
			\begin{align*}
			W_\ell&=\frac{M-Nm_\ell}{N^{1/2}}\,\,\mbox{and}\,\,W_\ell'=\frac{M'-Nm_\ell}{N^{1/2}}.
			\end{align*} First by adapting exactly the same argument as the proof of Lemma \ref{lem3} and applying Lemma \ref{lem-1}, one obtains an analogue of Lemma \ref{lem3} for the conditional probability $\p_\ell$, whose proof will be omitted.
			 
			\begin{lemma}\label{lem-2}
		    We have
			\begin{align}\label{lem-2:eq1}
			\e_\ell[W_\ell-W_\ell'|W_\ell]&=g_\ell(W_\ell)+r_\ell(W_\ell),
			\end{align} 
			where 
			\begin{align*}
			g_\ell(W_\ell)&=\frac{L_1'(m_\ell)}{N}W_\ell
			\end{align*}
			and $r_\ell$ satisfies
			\begin{align*}
			|r_\ell(W_\ell)|&\leq \frac{K}{N^{3/2}}(W_\ell^2+1)+\frac{1}{N^{1/2}}|T_{\ell,1}(m(\sigma))|.
			\end{align*}
			In addition,
			\begin{align*}
			\e_\ell\Bigl|1-\frac{c_{0,\ell}}{2}\e_\ell[(W_\ell-W_\ell')^2|W_\ell]\Bigr|&\leq K\Bigl(\frac{1}{N^{1/2}}+\frac{1}{N}+\frac{1}{N}\p_\ell(S_\ell'')\Bigr),
			\end{align*}
			where $c_{0,\ell}={2N}/{L_2(m_\ell)}.$
			\end{lemma}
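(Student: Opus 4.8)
The plan is to transcribe the proof of Lemma \ref{lem3} in the case $k=0$, with Proposition \ref{lem1} replaced by its conditional analogue Lemma \ref{lem-1}; the only genuinely new ingredient is the bookkeeping of the error term $T_{\ell,k}$.

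First I would record that, as noted at the start of this section, the hypothesis $(J,h)\in\Gamma$ translates into $L_1(m_\ell)=0$ and $L_1'(m_\ell)>0$, which here play the role of the hypotheses on $L_1$ in Lemma \ref{lem3}. Taylor-expanding $L_1$ at $m_\ell$ and using $L_1(m_\ell)=0$ gives
\begin{align*}
L_1(m(\sigma))=L_1'(m_\ell)\bigl(m(\sigma)-m_\ell\bigr)+\int_{m_\ell}^{m(\sigma)}L_1''(s)\bigl(m(\sigma)-s\bigr)\,ds.
\end{align*}
Dividing the $k=1$ case of \eqref{lem-1:eq1} by $N^{1/2}$ and substituting $m(\sigma)-m_\ell=W_\ell/N^{1/2}$ then produces \eqref{lem-2:eq1} with $g_\ell(W_\ell)=L_1'(m_\ell)W_\ell/N$ and with $r_\ell$ the sum of three pieces: the quadratic Taylor remainder, of size $O(W_\ell^2/N^{3/2})$ since $L_1''$ is bounded on $[0,1]$; the term $N^{-1/2}R_1(m(\sigma))=O(N^{-3/2})$ supplied by Proposition \ref{lem1}; and the new term $N^{-1/2}T_{\ell,1}(m(\sigma))$. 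Collecting these gives exactly the stated bound on $|r_\ell(W_\ell)|$.

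For the variance estimate I would specialize \eqref{lem-1:eq1} to $k=2$, write $L_2(m(\sigma))=L_2(m_\ell)+\int_{m_\ell}^{m(\sigma)}L_2'(s)\,ds$, and note that the choice $c_{0,\ell}=2N/L_2(m_\ell)$ is precisely the one cancelling the constant term, so that
\begin{align*}
1-\frac{c_{0,\ell}}{2}\,\e_\ell[(W_\ell-W_\ell')^2|W_\ell]=-\frac{1}{L_2(m_\ell)}\Bigl(\int_{m_\ell}^{m(\sigma)}L_2'(s)\,ds+R_2(m(\sigma))+T_{\ell,2}(m(\sigma))\Bigr).
\end{align*}
Bounding $L_2'$ on $[0,1]$, using $|R_2|\le K/N$, and $\e_\ell|T_{\ell,2}(m(\sigma))|\le K\p_\ell(S_\ell'')$ from Lemma \ref{lem-1}, the whole estimate reduces to proving $\e_\ell|m(\sigma)-m_\ell|\le KN^{-1/2}$, the $\p_\ell(S_\ell'')$-terms being treated below. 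I would obtain this bound exactly as in Lemmas \ref{prop3} and \ref{lem5}: first the conditional version of Lemma \ref{prop3}, namely that under $\p_\ell$ the magnetization concentrates at $m_\ell$ at an exponential rate — because on the half-line cut out by $S_\ell$ the potential $\tilde p$ is maximized only at $m_\ell$ — and then feeding \eqref{lem-2:eq1} into the computation of Lemma \ref{lem5} to obtain $\e_\ell W_\ell^2\le K$, whence $\e_\ell|m(\sigma)-m_\ell|=N^{-1/2}\e_\ell|W_\ell|\le KN^{-1/2}$.

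The main obstacle, and the only point requiring real care, is the control of $T_{\ell,k}$, that is, of the thin slab $S_\ell''$ on which the conditional update rule \eqref{ae:eq1} breaks down because the exchangeable pair $(\sigma,\sigma')$ can straddle the threshold $\xi N$. The key observation is that $\xi\in(m_1,m_2)$ lies at a fixed positive distance from $m_\ell$, so the same exponential concentration estimate yields $\p_\ell(S_\ell'')\le Ke^{-\eta N}$ for some $\eta>0$; hence the $\p_\ell(S_\ell'')$ term in the variance bound, and $\e_\ell|T_{\ell,1}(m(\sigma))|\le K\p_\ell(S_\ell'')$ wherever $\e_\ell|r_\ell|$ later enters the Stein bound, are all negligible, and everything else is line-for-line the unconditional argument of Lemma \ref{lem3} and its consequences.
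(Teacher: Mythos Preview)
Your proposal is correct and follows precisely the route the paper indicates (the paper omits the proof, saying only that it is obtained ``by adapting exactly the same argument as the proof of Lemma~\ref{lem3} and applying Lemma~\ref{lem-1}''). One organizational remark: you fold the moment bound $\e_\ell W_\ell^2\le K$ and the estimate $\p_\ell(S_\ell'')\le Ke^{-\eta N}$ into your argument for the variance inequality, whereas the paper defers both of these to the separate Lemma~\ref{lem-3} and its proof; since the variance bound as stated already contains the factor $N^{-1/2}$ (rather than $\e_\ell|W_\ell|\,N^{-1/2}$), some forward reference to $\e_\ell W_\ell^2\le K$ is indeed needed, exactly as in the paper's Lemma~\ref{lem3}/Lemma~\ref{lem5} pair, and you have identified this correctly.
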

			
		   The next lemma below will play a similar role as Lemma \ref{lem5}.

	         	\begin{lemma}
	         		\label{lem-3}
	         		We have that $\e_\ell|W_\ell|^2\leq K$ for all $N\geq 1.$
	         	\end{lemma}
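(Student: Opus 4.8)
The plan is to mimic the bootstrap argument used in the proof of Lemma \ref{lem5}, but now working under the conditional measure $\p_\ell$ and with $k=0$, so that $W_\ell=(M-Nm_\ell)/N^{1/2}$. Starting from the identity \eqref{lem-2:eq1} with $g_\ell(W_\ell)=L_1'(m_\ell)W_\ell/N$, I would solve for $W_\ell$, multiply both sides by $W_\ell$, and take expectation $\e_\ell$ to obtain
\begin{align*}
\e_\ell W_\ell^2&=\frac{N}{L_1'(m_\ell)}\Bigl(\e_\ell[(W_\ell-W_\ell')W_\ell]-\e_\ell[W_\ell r_\ell(W_\ell)]\Bigr).
\end{align*}
By exchangeability of $(W_\ell,W_\ell')$ we have $\e_\ell[(W_\ell-W_\ell')W_\ell]=\tfrac12\e_\ell(W_\ell-W_\ell')^2$, and the trivial bound $|W_\ell-W_\ell'|\leq 2/N^{1/2}$ gives $N\,\e_\ell[(W_\ell-W_\ell')W_\ell]\leq 2$. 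So the first term is harmless; everything reduces to controlling $N\,\e_\ell|W_\ell r_\ell(W_\ell)|$.

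Using the bound on $r_\ell$ from Lemma \ref{lem-2},
\begin{align*}
N\,\e_\ell|W_\ell r_\ell(W_\ell)|&\leq \frac{K}{N^{1/2}}\e_\ell\bigl(|W_\ell|^3+|W_\ell|\bigr)+N^{1/2}\,\e_\ell\bigl[|W_\ell|\,|T_{\ell,1}(m(\sigma))|\bigr].
\end{align*}
The term $N^{-1/2}\e_\ell|W_\ell|$ is bounded by $1$ since $|W_\ell|\leq N^{1/2}$. For $N^{-1/2}\e_\ell|W_\ell|^3$, I would apply Lemma \ref{prop3} (via \eqref{eq-2}, which transfers concentration of $m(D)$ under $\p(\cdot|\mathscr{D}_\ell)$ to concentration of $m(\sigma)$ under $\p_\ell$ near $m_\ell$—note that on $\mathscr{D}_\ell$ the only maximizer within reach is $m_\ell$): for any $\delta>0$ there are $\eta>0$, $K>0$ with $\p_\ell(|W_\ell|\geq\delta N^{1/2})=\p_\ell(|m(\sigma)-m_\ell|\geq\delta)\leq Ke^{-N\eta}$. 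Splitting the expectation exactly as in \eqref{lem3:proof:eq1},
\begin{align*}
\frac{\e_\ell|W_\ell|^3}{N^{1/2}}&\leq \delta\,\e_\ell|W_\ell|^2+N\,\p_\ell(|m(\sigma)-m_\ell|\geq\delta)\leq \delta\,\e_\ell|W_\ell|^2+KNe^{-\eta N}.
\end{align*}
For the $T_{\ell,1}$ term, Lemma \ref{lem-1} gives $\e_\ell|T_{\ell,1}(m(\sigma))|\leq K\,\p_\ell(\sigma\in S_\ell'')$, and since $S_\ell''$ forces $m(\sigma)$ to lie in a window of width $2/N$ around the level $\xi$, which is a fixed positive distance from $m_\ell$, Lemma \ref{prop3} again yields $\p_\ell(S_\ell'')\leq Ke^{-N\eta}$; combined with $|W_\ell|\leq N^{1/2}$ this bounds $N^{1/2}\e_\ell[|W_\ell|\,|T_{\ell,1}|]$ by $KN\,e^{-\eta N}$.

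Assembling these estimates into the expression for $\e_\ell W_\ell^2$ gives
\begin{align*}
\Bigl(1-\frac{K\delta}{L_1'(m_\ell)}\Bigr)\e_\ell W_\ell^2&\leq \frac{1}{L_1'(m_\ell)}\bigl(2+K+KNe^{-\eta N}\bigr),
\end{align*}
and choosing $\delta$ small enough (using $L_1'(m_\ell)>0$) that the coefficient on the left is bounded below by, say, $1/2$, and absorbing $KNe^{-\eta N}$ into a constant uniform in $N$, we conclude $\e_\ell W_\ell^2\leq K$ for all $N\geq 1$. The main obstacle is the conditioning: one must be careful that $\p_\ell$ still concentrates around the single point $m_\ell$ (not around both $m_1$ and $m_2$), which is precisely why the events $S_\ell$ were defined with the cutoff $\xi\in(m_1,m_2)$, so that $m_\ell$ is the unique maximizer of $\tilde p$ within the relevant region and Lemma \ref{prop3} can be invoked in the conditioned setting; the auxiliary error term $T_{\ell,1}$ coming from boundary configurations in $S_\ell''$ is controlled by the same exponential concentration since $\xi$ is bounded away from $m_\ell$.
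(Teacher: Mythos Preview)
Your proposal is correct and follows essentially the same approach as the paper: both start from \eqref{lem-2:eq1}, multiply by $W_\ell$, use exchangeability to bound $N\e_\ell[(W_\ell-W_\ell')W_\ell]\le 2$, split $\e_\ell|W_\ell|^3/N^{1/2}$ via the concentration bound $\p_\ell(|m(\sigma)-m_\ell|\ge\delta)\le Ke^{-\eta N}$, and absorb the boundary term $T_{\ell,1}$ using $\p_\ell(S_\ell'')\le Ke^{-\eta N}$. The only cosmetic difference is that the paper obtains the conditional concentration by writing $\p_\ell(\cdot)=\p(\cdot\cap S_\ell)/\p(S_\ell)$ and invoking $\p(S_\ell)\to p_\ell>0$, whereas you phrase it via \eqref{eq-2}; your handling of the $T_{\ell,1}$ contribution (using $|W_\ell|\le N^{1/2}$ to get $KN e^{-\eta N}$) is in fact slightly cleaner than the paper's $KN^{1/2}\p_\ell(S_\ell'')$ bookkeeping.
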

	         	
	         	\begin{proof}
	         		Similar to the proof of Lemma \ref{lem5}, multiplying $W_\ell$ on both sides of \eqref{lem-2:eq1} and then taking expectation give
	         		\begin{align}
	         		\begin{split}
	         		\notag
	         		\e_\ell W_\ell^2&=\frac{N}{L_1'(m_\ell)}\Bigl(\e_\ell[(W_\ell-W_\ell')W_\ell]-\e_\ell W_\ell r_\ell(W_\ell)\Bigr)
	         		\end{split}\\
	         		\begin{split}
	         		\label{proof:eq4}
	         		&\leq \frac{1}{L_1'(m_\ell)}\Bigl(N\e_\ell[(W_\ell-W_\ell')W_\ell]+\frac{K\e_\ell |W_\ell|^{3}}{N^{1/2}}+\frac{K\e_\ell|W_\ell|}{N^{1/2}}+KN^{1/2}\p_\ell (\sigma\in S_\ell'')\Bigr).
	         		\end{split}
	         		\end{align}
	         		Let us now bound each term on the right-hand side as follows. Using the exchangeability of $(W_\ell,W_\ell')$ under $\p_\ell$ and the bound $|W_\ell-W_\ell'|\leq 2/N^{1/2}$, we obtain the control of the first term
	         		\begin{align}\label{proof:eq-7}
	         		N\e_\ell (W_\ell-W_\ell')W_\ell&=\frac{N}{2}\e_\ell(W_\ell-W_\ell')^2\leq 2.
	         		\end{align}
	         		For the third term, it can be easily controlled 
	         		\begin{align}
	         		\label{proof:eq7}
	         		\frac{\e_\ell|W_\ell|}{N^{1/2}}\leq 1
	         		\end{align}
	         		 by noting $|W_\ell|\leq N^{1/2}$. As for the other two terms, note that since $\p(\sigma\in S_\ell)$ converges to $p_\ell>0$ as $N\rightarrow\infty$, this together with Lemma \ref{prop3} implies that for any $\delta>0,$ there exists some $\eta>0$ and $K>0$ such that
	         		\begin{align*}
	         		{\p}_1(|m(\sigma)-m_1|\geq \delta)\leq \frac{\p(|m(\sigma)-m_1|\geq \delta, m(\sigma)<\xi)}{\p(S_1)}\leq Ke^{-\eta N},\\
	         	    {\p}_2(|m(\sigma)-m_2|\geq \delta)\leq \frac{\p(|m(\sigma)-m_2|\geq \delta, m(\sigma)>\xi)}{\p(S_2)}\leq Ke^{-\eta N}
	         		\end{align*}
	         		for all $N\geq 1.$ Consequently, these imply that for $N$ large enough,
	         		\begin{align}
	         		\begin{split}	         		\label{proof:eq5}
	         		\p_\ell(\sigma\in S_\ell'')&\leq \p_\ell(|m(\sigma)-\xi|\leq 2/N)\leq Ke^{-\eta N}.
	         		\end{split}
	         		\end{align}
	         	    Finally, proceeding in the same way as \eqref{lem3:proof:eq1}, we see that
	         		\begin{align}\label{proof:eq6}
	         		\frac{\e_\ell |W_\ell|^{3}}{N^{1/2}}&\leq \delta \e_\ell|W_\ell|^{2}+KNe^{-\eta N}.
	         		\end{align}
	         		Plugging  \eqref{proof:eq-7}, \eqref{proof:eq7}, \eqref{proof:eq5} and \eqref{proof:eq6} into \eqref{proof:eq4} gives
	         		\begin{align*}
	         		\Bigl(1-\frac{K}{L_1'(m_\ell)}\delta\Bigr)\e_\ell|W_\ell|^2&\leq \frac{K'}{L_1'(m_\ell)}\Bigl(1+Ne^{-\eta N}+N^{1/2}e^{-\eta N}\Bigr),
	         		\end{align*}
	         		for some $K'>0.$ Note that $K$ is independent of $\delta.$ We can choose $\delta$ small enough so that the announced statement holds. 
	         	\end{proof}
	
		\begin{proof}[Proof of Theorem \ref{thm4}]
        Recall $c_{0,\ell}$ and $g_{\ell}$ from \eqref{lem-2}. Define
        $$
        p_\ell(t)=c_{1,\ell}e^{-c_{0,\ell}\int_0^tg_\ell(s)ds}=c_{1,\ell}e^{-d_\ell t^{2}},
        $$			
        where $c_{1,\ell}$ is the normalizing constant such that $p_\ell$ forms a probability density on $\mathbb{R}$ and $d_\ell=L_1'(m_\ell)/L_2(m_\ell)>0.$ Observe that 
        \begin{align*}
        &c_{0,\ell}|g_\ell'(x)|\Bigl(|x|+\frac{3}{c_{1,\ell}}\Bigr)\min\Bigl(\frac{1}{c_{1,\ell}},\frac{1}{|c_{0,\ell}g_{\ell}(x)|}\Bigr)\\
        &=2d_\ell|x|\Bigl(|x|+\frac{3}{c_{1,\ell}}\Bigr)\min\Bigl(\frac{1}{c_{1,\ell}},\frac{1}{d_\ell x^2}\Bigr)
        \end{align*}
        has a limit at infinity and is clearly bounded for arbitrary small $x.$ This function has a uniform upper bound over $\mathbb{R}$ that is denoted by $c_{2,\ell}.$ Let $\delta_\ell=2/N^{1/2}$ and $\Delta_\ell=W_\ell-W_\ell'.$ Applying these $c_{0,\ell},c_{1,\ell},c_{2,\ell},g_\ell,r_\ell,p_\ell$ and $(W_\ell,W_\ell')$ under $\p_\ell,$ the inequality \eqref{thm1:ass} leads to
        \begin{align*}
        	&\sup_z|\p_\ell(W_\ell\leq z)-\p(X_\ell\leq z)|\\
        	&\leq 3\e_\ell\Bigl|1-\frac{c_{0,\ell}}{2}\e_\ell[\Delta_\ell^2|W_\ell]\Bigr|+\frac{2c_{0,\ell}}{c_{1,\ell}}\e_\ell|r_\ell(W_\ell)|\\
        	&+c_{1,\ell}\max(1,c_{2,\ell})\e_{\ell}|\delta_\ell| +|\delta_\ell|^3c_{0,\ell}\Bigl\{\Bigl(2+\frac{c_{2,\ell}}{2}\e_\ell |c_{0,\ell}g_\ell(W_\ell)|\Bigr)+\frac{c_{1,\ell}c_{2,\ell}}{2}\Bigr\}\\
        	&\leq K\Bigl(\frac{1}{N^{1/2}}\e_\ell|W_\ell^2|+\frac{1}{N}+\frac{1}{N^{1/2}}+\frac{1}{N^{3/2}}+N^{1/2}\p_\ell(\sigma\in S_\ell'')+\frac{1}{N}\p_\ell(\sigma\in S_\ell'')\Bigr).
        \end{align*}
       Here the second inequality used Lemmas \ref{lem-1} and \ref{lem-2} and $|\Delta_\ell|\leq \delta_\ell.$ Finally, using Lemma \ref{lem-3}, the inequality \eqref{proof:eq5}  and the identity \eqref{eq-2} finishes our proof.
		\end{proof}

	\thebibliography{99}

	\bibitem{ACM14}
	D. Alberici, P. Contucci, E. Mingione: A mean-field monomer-dimer model with attractive interaction. Exact solution and rigorous results. {\it J. Math. Phys.}, Vol. 55, 063301:1-27 (2014)
	
	\bibitem{ACFM15}
	D. Alberici, P. Contucci, M. Fedele, E. Mingione: Limit theorems for monomer-dimer mean-field models with attractive potential. Preprint available at arXiv:1506.04241 (2015)
	
	\bibitem{CS11}
	S. Chatterjee, Q.-M. Shao: Nonnormal approximation by Stein’s method of exchangeable pairs with application to the Curie-Weiss model. {\it Ann. Appl. Probab.}, {\bf21}(2), 464483 (2011)

    \bibitem{CFS13}
    L. H. Y. Chen, X. Fang, Q.-M. Shao: From Stein identities to moderate deviations. {\it Ann. probab.}, Vol. 41, {\bf 1}, 262-293 (2013)
    	
	\bibitem{EL09}
	P. Eichelsbacher, M. L\"{o}we: Stein's method for dependent random variables occurring in statistical mechanics. {\it Electron. J. Probab.}, Vol. 15, {\bf30}, 962–988 (2010)

	\bibitem{EN78:1} 
	R. S. Ellis, C. M. Newman: Limit theorems for sums of dependent random variables occurring in statistical mechanics. {\it Probab. Theory Related Fields}, {\bf 44}, 117-139 (1978)
	
	\bibitem{EN78:2} 
	R. S. Ellis, C. M. Newman: The statistics of Curie-Weiss models. {\it J. Stat. Phys.}, {\bf 19}, 149-161 (1978)
	
	\bibitem{KM13}
	K. Kirkpatrick, E. Meckes: Asymptotics of the mean-field Heisenberg model. {\it J. Stat. Phys.}, {Vol. 152}, {\bf 1}, 54-92 (2013)

\end{document}